\newtheorem{Theorem}{Theorem}[section]
\newtheorem{Proposition}{Proposition}[section]
\newtheorem{Remark}{Remark}[section]
\newtheorem{Lemma}{Lemma}[section]
\newtheorem{Definition}{Definition}[section]
\newtheorem{algo}{Algorithm}[section]
\newtheorem{Example}{Example}[section]
\newcommand{\bd}{\begin{displaymath}}
\newcommand{\ed}{\end{displaymath}}
\newcommand{\be}{\begin{equation}}
\newcommand{\ee}{\end{equation}}
\newcommand{\bea}{\begin{eqnarray}}
\newcommand{\eea}{\end{eqnarray}}
\newcommand{\bda}{\begin{eqnarray*}}
\newcommand{\eda}{\end{eqnarray*}}
\newcommand{\ba}{\begin{array}}
\newcommand{\ea}{\end{array}}
\begin{document}

\title{\bf The Forward-Backward-Forward Method from continuous and discrete perspective for pseudo-monotone variational inequalities in Hilbert spaces}

\author{
R. I. Bo\c t\thanks{Corresponding author. Faculty of Mathematics,
	University of Vienna,
	Oskar-Morgenstern-Platz 1,
	1090 Vienna,
	Austria, e-mail: radu.bot@univie.ac.at. Research partially supported by FWF (Austrian Science Fund), project I 2419-N32.}
\and 
 E. R. Csetnek\thanks{Faculty of Mathematics,
 	University of Vienna,
 	Oskar-Morgenstern-Platz 1,
 	1090 Vienna,
 	Austria, e-mail: ernoe.robert.csetnek@univie.ac.at. Research supported by FWF (Austrian Science Fund), project P 29809-N32.}
 \and  P. T. Vuong\thanks{Faculty of Mathematics,
	University of Vienna,
	Oskar-Morgenstern-Platz 1,
	1090 Vienna,
	Austria, e-mail: vuong.phan@univie.ac.at. Research supported by FWF (Austrian Science Fund), project I 2419-N32.}
}
\maketitle

\begin{abstract}
Tseng's forward-backward-forward algorithm is a valuable alternative for  Korpelevich's extragradient method  when solving variational inequalities over a convex and closed set governed by monotone and Lipschitz continuous operators, as it requires in every step only one projection operation. However, it is well-known that Korpelevich's method converges and can therefore be used also for solving variational inequalities governed by pseudo-monotone and Lipschitz continuous operators. In this paper, we first associate to a pseudo-monotone variational inequality a forward-backward-forward dynamical system and carry out an asymptotic analysis for the generated trajectories. The explicit time discretization of this system results into Tseng's forward-backward-forward algorithm with relaxation parameters, which we prove to converge also when it is applied to pseudo-monotone variational inequalities. In addition, we show that linear convergence is guaranteed under strong pseudo-monotonicity. Numerical experiments are carried out for pseudo-monotone variational inequalities over polyhedral sets and fractional programming problems.
\end{abstract}

\begin{keywords}
 convex programming, variational inequalities, pseudo-monotonicity, dynamical system, Tseng's FBF algorithm
\end{keywords}

\begin{AMS}
  47J20, 90C25, 90C30, 90C52
\end{AMS}

\section{Introduction and preliminaries} 
\label{sec:intro}
In this paper, the object of our investigation is the following variational inequality of \emph{Stampacchia type}:

Find $x^*\in C$ such that
\begin{equation}\label{VariationalInequality}
\left\langle F(x^*), x-x^*\right\rangle   \geq 0 \quad \forall x\in C,
\end{equation}
where $C$ is a nonempty, convex and closed subset of the real Hilbert space $H$, endowed with inner product $\left\langle \cdot, \cdot\right\rangle $ and corresponding norm $\|\cdot\|$, and
$F: H \to H$ is a Lipschitz continuous operator. We abbreviate the problem \cref{VariationalInequality} as VI($F, C$) and denote its solution set by $\Omega$.

Variational inequalities (VIs) are powerful mathematical models which unify important concepts in applied mathematics, like systems of nonlinear equations, optimality
conditions for optimization problems, complementarity problems, obstacle problems, and
network equilibrium problems (see, for instance, \cite{FacchineiPang03, KinderlehrerStampacchia80}). 
In the last decades, various solution methods for solving problems of type VI($F, C$) have been proposed (see \cite{FacchineiPang03, KinderlehrerStampacchia80}). These methods typically require certain monotonicity properties for the operator $F$ (see \cite{KaramardianSchaible90}). 

The most popular algorithm for solving variational inequalities is the so-called projected-gradient method, which generates, for a starting point $x_0 \in H$, a sequence that approaches the solution set $\Omega$ by
$$ x_{n+1}=P_C(x_n - \lambda F(x_n)) \quad \quad \forall n \geq 0,$$
where $P_C$ is the projection operator onto the convex and closed set $C$ and $\lambda$ is a positive stepsize. It is known  that the sequence $(x_n)_{n\geq 0}$ converges, if $F$ is cococercive (inverse strongly monotone) (\cite{Bauschkebook, ZhuMarcotte96}) or $F$ is
strongly (pseudo-) monotone (\cite{FacchineiPang03,Kha}). The projected-gradient method with variable step sizes was proved to convergence also for  variational inequalities governed by (not necessarily single-valued) maximally monotone and paramonotone operators (\cite{BelloCruzIusem}). If $F$ is ``only'' monotone, then $(x_n)_{n\geq 0}$ does not necessarily  convergence  (see \cite{FacchineiPang03} for an example).  Very recently, Malitsky \cite{Malitsky} introduced a modification of the projected-gradient method, called projected-reflected-gradient method, which, for a starting point $x_0 \in H$, reads
$$  x_{n+1}=P_C(x_n - \lambda F(2x_n-x_{n-1})) \quad \quad \forall n \geq 0.$$
The sequence $(x_n)_{n \geq 0}$ is shown to converge to an element in $\Omega$, if $F$ is monotone. Further extensions of this method can be found in \cite{Malitsky2,Malitsky3}.

The  {mostly used} algorithm in the literature to solve variational inequalities governed by Lipschitz continuous and pseudo-monotone operators is Korpelevich's extragradient method (see \cite{Korpelevich}) or variants of it. All these methods share the feature to perform two projections per iteration. Korpelevich's extragradient method generates, for a starting point $x_0 \in H$, a sequence $(x_n)_{n \geq 0}$ approaching the solution set $\Omega$ as follows
$$ 
\begin{cases}
\begin{aligned}
    y_{n}=P_C(x_n - \lambda F(x_n))\\
    x_{n+1}=P_C(x_n - \lambda F(y_n))
\end{aligned} \quad \quad \forall n \geq 0.
\end{cases}
$$
This algorithm was originally introduced for solving monotone VIs in finite dimensional spaces, however, it was shown in \cite[Theorem 12.2.11]{FacchineiPang03} that it converges even when $F$ is a pseudo-monotone operator. In the last years, the extragradient method has attracted a lot of attention from the research community (see, for instance, \cite{CengTeboulleYao, Censor, FacchineiPang03, HarkerPang90, SolodovSvaiter99, SolodovTseng96, Vuong}). In infinite dimensional spaces, Ceng, Teboulle and Yao  proved in \cite{CengTeboulleYao} that, if $F$ is additionally \emph{sequentially weak-to-strong continuous} (which is however not satisfied by the identity operator), then the sequence $(x_n)_{n \geq 0}$ converges weakly to an element in $\Omega$. It was recently proved in \cite{Vuong} that this statement remains true even if the operator $F$ is  \emph{sequentially weak-to-weak continuous}.

A challenging task when designing efficient algorithms for solving variational inequalities is to keep the number of projection operations performed at each iteration as low as possible. Projection operations may be very expensive, in particular when for these no closed formulas are available. Censor, Gibali and Reich proposed in \cite{CensorGibali,Censor}, for a starting point $x_0 \in H$, the following numerical scheme, called subgradient-extragradient method
$$
\begin{cases}
\begin{aligned}
    y_{n}=P_C(x_n-\lambda F(x_n))\\
    x_{n+1}=P_{T_n}(x_n-\lambda F(y_n)) 
\end{aligned} \quad \quad \forall n \geq 0,
\end{cases} 
$$
where 
$$
T_n=\{w \in H: \left\langle x_n-\lambda F(x_n)-y_n, w-y_n \right\rangle \leq 0 \}.
$$ 
The projection onto the half-space $T_n$ can be explicitly given (see, for instance, \cite{Bauschkebook}), thus, the subgradient-extragradient method requires the computation of only one projection per iteration and outperforms from this point of view the extragadient method. {The subgradient-extragradient method converges for monotone VIs (see \cite{Censor}), but also for 
pseudo-monotone VIs  (see \cite{CensorGibali, ThongShehuIyiola})}.

In this paper, we first attach to VI($F, C$) a dynamical system of forward-backward-forward-type (see \eqref{DS}) and carry out a convergence analysis for the generated trajectories to an element in $\Omega$, in the case when $F$ is a pseudo-monotone operator. If $F$ is assumed to be strongly pseudo-monotone, we prove that the trajectory converges exponentially to the unique solution of VI($F, C$). Dynamical systems of forward-backward-forward type were first studied in  \cite{BB18} in the context of approaching the set of the zeros of the sum of a maximally monotone operator and a monotone and Lipschitz continuous operator by continuous trajectories.

The explicit time discretization of \eqref{DS} leads to Tseng's forward-backward-forward algorithm with relaxation parameters (\cite{Tseng2000}). When applied to  the solving of monotone operators, this algorithm, which requires the computation of only one projection per iteration, 
is known to generate a sequence, which weakly converges to a solution of VI($F, C$). {In this paper we show that this convergence result remains true even if $F$ is a pseudo-monotone and \emph{sequentially weak-to-weak-continuous} operator, for both an underrelaxed and an overrelaxed variant of Tseng's algorithm, and 
provide examples of operators that fulfill the assumptions of the convergence theorems. We also prove that the convergence statement remains true  in finite dimensional spaces under less restrictive assumptions on $F$. In addition, we propose an adaptive stepsize strategy, which does not require the knowledge
of the Lipschitz constant of the governing operator. This shows} that Tseng's algorithm is a method to be considered when solving constrained pseudo-convex differentiable
optimization problems. We also show that linear convergence is guaranteed when the pseudo-monotonicity for $F$ is replaced by strong pseudo-monotonicity. In the last section we carry out numerical experiments which show that, when  applied to pseudo-monotone variational inequalities over polyhedral sets and to fractional programming problems, Tseng's method outperforms Korpelevich's extragradient method and even the subgradient-extragradient method.

{We want to notice that a single projection method of Halpern-type for pseudo-monotone variational inequalities in Hilbert spaces, which consequently generates a strongly convergent sequence to a solution, has been recently provided in \cite{ShehuDongJiang}.}

We close this section by recalling some notions and results which will be useful within this paper.

\begin{Definition} Let $C$ be a nonempty subset of the real Hilbert space $H$. The mapping $F: H \rightarrow H$ is said to be
\begin{description}
\item[{\rm (a)}]  pseudo-monotone on $C$, if for every $x,y \in C$ it holds
$$
\langle F(x),y-x\rangle\geq 0\;\Rightarrow\; \langle F(y),y-x\rangle\geq 0;
$$
\item[{\rm (b)}]  monotone on $C$, if  for every $x,y \in C$ it holds
$$
\langle F(y)-F(x), y-x\rangle\geq 0;
$$
\item[{\rm (c)}] $\gamma$-strongly pseudo-monotone on $C$ with $\gamma >0$, if
for every $x,y \in C$ it holds
$$
\langle F(x),y-x\rangle\geq 0\;\Rightarrow\; \langle F(y),y-x\rangle\geq \gamma\|x-y\|^2; 
$$ 
\item[{\rm (d)}]   $\gamma$-strongly monotone on $C$ with $\gamma >0$, if
for every $x,y \in C$ it holds
$$
\langle F(y)-F(x), y-x\rangle\geq \gamma\|x-y\|^2.
$$
\end{description}
\end{Definition}

For a survey on pseudo-monotone operators and their applications in consumer theory of mathematical economics we refer to \cite{HadjisavvasSchaibleWong}.


We recall that the operator $F : H \rightarrow H$ is called \emph{Lipschitz continuous with Lipschitz constant $L > 0$}, if for every $x,y \in H$ it holds
$$
\|F(x)-F(y)\| \leq L \|x-y \|.
$$
The operator $F$ is called \emph{sequential weak-to-weak continuous},
if for every sequence $(x_n)_{n \geq 0}$ that converges weakly to $x$ the sequence $(F(x_n))_{n \geq 0}$ converges weakly to $F(x)$. 

For a nonempty, convex and closed set $C \subseteq H$ and an arbitrary element $x \in H$, there exists a unique element in $C$, denoted by $P_C(x)$, such that
$$
\|x-P_C(x)\|\leq \|x-y\| \quad \forall y\in C.
$$
The operator $P_C : H \rightarrow C$ is the \emph{projection operator} onto $C$. For all $x \in H$ and $y \in C$ it holds
\begin{equation}\label{projectionProperty}
\left<x-P_C(x),y-P_C(x) \right>\leq 0.
\end{equation}
One can also easily see that, for $\lambda >0$, $x^*$ is a solution of {\rm VI($F, C$)} if and only if $x^*=P_C(x^*-\lambda F(x^*))$. We recall the following characterization of the solution set
of a pseudo-monotone variational inequality (\cite[Lemma 2.1]{CottleYao}).

\begin{Proposition}\label{Minty_Lemma}
Let $C$  be a nonempty, convex and closed subset of the real Hilbert space $H$ and $F: H\rightarrow H$ an operator which is
pseudo-monotone on $C$ and continuous. Then for every $x \in C$ we have
\begin{equation}\label{Minty}
\langle F(x), y-x \rangle \geq 0 \quad \forall y \in C \ \Leftrightarrow \ \langle F(y), y-x\rangle\geq 0 \quad \forall y\in C.
\end{equation}
\end{Proposition}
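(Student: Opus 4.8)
The plan is to establish the two implications separately, since they rely on different hypotheses; throughout, keep $x \in C$ fixed.

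For the forward implication ($\Rightarrow$), I would assume $\langle F(x), y-x\rangle \geq 0$ for all $y \in C$ and observe that this is precisely the premise in the defining property of pseudo-monotonicity on $C$ applied to the pair $(x,y)$. Fixing an arbitrary $y \in C$, the inequality $\langle F(x), y-x\rangle \geq 0$ together with pseudo-monotonicity of $F$ immediately yields $\langle F(y), y-x\rangle \geq 0$. As $y$ was arbitrary, the right-hand side of \eqref{Minty} follows. This direction is essentially a one-line consequence of pseudo-monotonicity and uses neither the convexity of $C$ nor the continuity of $F$.

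For the reverse implication ($\Leftarrow$), I would assume $\langle F(y), y-x\rangle \geq 0$ for all $y \in C$ and instead exploit convexity and continuity through the standard segment (linearization) trick. Fix $y \in C$ and, for $t \in (0,1]$, set $y_t := x + t(y-x) = (1-t)x + ty$, which lies in $C$ by convexity. Applying the hypothesis at $y_t$ gives $\langle F(y_t), y_t - x\rangle \geq 0$; since $y_t - x = t(y-x)$ with $t > 0$, dividing by $t$ produces $\langle F(y_t), y-x\rangle \geq 0$. Letting $t \to 0^+$ we have $y_t \to x$, so continuity of $F$ gives $F(y_t) \to F(x)$, and passing to the limit yields $\langle F(x), y-x\rangle \geq 0$. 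Since $y \in C$ was arbitrary, the left-hand side of \eqref{Minty} follows.

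I do not anticipate a serious obstacle, as this is a classical Minty-type argument. The only point requiring mild care is the limiting step in the reverse direction: one must check that the inner product passes to the limit, which is justified because $F(y_t) \to F(x)$ strongly by continuity while the fixed second argument $y-x$ remains unchanged, so $\langle F(y_t), y-x\rangle \to \langle F(x), y-x\rangle$. It is worth recording in the write-up that pseudo-monotonicity is used only for the forward implication, whereas the reverse relies solely on convexity of $C$ and continuity of $F$.
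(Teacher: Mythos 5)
Your proof is correct and is the classical Minty-type argument; the paper itself does not prove this proposition but simply recalls it from \cite[Lemma 2.1]{CottleYao}, where essentially the same two-step reasoning (pseudo-monotonicity for the forward implication, the segment-and-continuity trick for the reverse) is used. Your remarks about which hypotheses each direction actually needs are accurate and worth keeping.
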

The variational inequality:

Find $x^*\in C$ such that
\begin{equation*}
\left\langle F(x), x-x^*\right\rangle   \geq 0 \quad \forall x\in C
\end{equation*}
is called of \emph{Minty type}. Proposition \ref{Minty_Lemma} shows that the two variationaly inequalities have the same set of solutions when they are formulated over a nonempty, convex and closed set and governed by pseudo-monotone and continuous operators. Existence results for solutions of variational inequalities have been obtained for instance in \cite{KinderlehrerStampacchia80, Laszlo,  MaugeriRaciti}.

\section{A dynamical system of forward-backward-forward type} \label{Dynamic} 
In this section we will approach the solution set of VI($F, C$) from a continuous perspective by means of trajectories generated by the following dynamical system of forward-backward-forward type
\begin{eqnarray}\label{DS}
\begin{cases}
\begin{aligned}
y(t)&=P_C(x(t) - \lambda F(x(t)))\\
\dot{x}(t)+x(t) &=y(t) + \lambda\left[ F(x(t))-F(y(t))\right] \\
x(0)&=x_0,
\end{aligned} 
\end{cases}
\end{eqnarray}
where $\lambda>0$ and $x_0 \in H$. The formulation of \eqref{DS} has its roots in \cite{BB18}, where the continuous counterpart of Tseng's algorithm has been considered in the more general context of a monotone inclusion problem.
The existence and uniqueness of the trajectory $x \in {\cal C}^1([0,+\infty),H)$ generated by \eqref{DS} has been established in \cite{BB18}, as a consequence of the global Cauchy-Lipschitz Theorem and  by making use of the Lipschitz continuity of $F$. Here
we study the convergence of $x(t)$ and $y(t)$ to an element in $\Omega$ as $t \to +\infty$, in the case when $F$ is pseudo-monotone.
	
	\begin{Remark}\label{remark21} {\rm
		The explicit time discretization of the dynamical system (\ref{DS})  with step size $\rho_n>0$ and initial point $x_0 \in H$ yields for every $n \geq 0$ the following equation
		$$
		\frac{x_{n+1}-x_n}{\rho_n} + x_n= P_{C}(x_n-\lambda F(x_n))+\lambda F(x_n)-\lambda F[P_{C}(x_n-\lambda F(x_n))].
		$$
		Denoting $y_n:=P_{C}(x_n-\lambda F(x_n))$, we can rewrite this scheme as
		\begin{equation} \label{RFBF}
		\begin{cases}
		\begin{aligned}
		y_{n}&=P_{C}(x_n-\lambda F(x_n))\\
		x_{n+1}&= \rho_n \left( y_n + \lambda (F(x_n)-F(y_n))\right) +(1-\rho_n)x_n
		\end{aligned} \quad \quad \forall n \geq 0,
		\end{cases} 
		\end{equation}	
		which is precisely Tseng's forward-backward-forward algorithm with relaxation parameters $(\rho_n)_{n \geq 0}$. In the case $\rho_n=1$ for every $n \geq 0$, this iterative scheme reduces to the classical forward-backward-forward algorithm as it was introduced in \cite{Tseng2000}. 
		In Section \ref{sec:TsengMethod} we prove the convergence of the algorithm in \eqref{RFBF}.

}
\end{Remark}	

In the following we will investigate the asymptotic behaviour of the trajectory generated by the dynamical system \eqref{DS}. To this end we will use the following two results. The first one (see \cite[Lemma 5.2]{AAS2014}) is the continuous
counterpart of a result which states the convergence of quasi-Fej\'er monotone
sequences. The  second one (see \cite[Lemma 5.3]{AAS2014}) is the continuous version of the Opial Lemma.

\begin{Lemma} \label{FejerContinuous} 
	If $1\leq p<\infty, 1\leq r < \infty, A: [0,+\infty) \to [0,+\infty)$ is locally absolutely
	continuous, $A \in L^p([0,+\infty))$, $ B: [0,+\infty) \to \mathbb{R}, B \in L^r([0,+\infty))$ and for almost every $t \in [0,+\infty)$
	$$
	\frac{d}{dt}  A(t) \leq B(t),
	$$	
	then $\lim_{t \to +\infty} A(t)=0$.
\end{Lemma}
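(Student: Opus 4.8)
The plan is to prove the two one-sided statements $\liminf_{t\to+\infty} A(t) = 0$ and $\limsup_{t\to+\infty} A(t) = 0$ separately, whose combination yields $\lim_{t\to+\infty} A(t) = 0$. Note first that, being locally absolutely continuous, $A$ is continuous, and it is nonnegative by hypothesis.

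First I would establish $\liminf_{t\to+\infty} A(t) = 0$. This follows purely from the integrability $A \in L^p([0,+\infty))$: if one had $\liminf_{t\to+\infty} A(t) = c > 0$, there would exist $T \ge 0$ with $A(t) \ge c/2$ for all $t \ge T$, forcing $\int_T^{+\infty} A(t)^p\, dt = +\infty$ and contradicting $A \in L^p$.

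The heart of the argument is to upgrade this to $\limsup_{t\to+\infty} A(t) = 0$, and this is where I expect the main difficulty to lie, since the differential inequality $\dot A \le B$ only controls the increase of $A$ through the $L^r$-function $B$, which need not be integrable when $r > 1$. I would argue by contradiction: suppose $\limsup_{t\to+\infty} A(t) = M > 0$ and fix levels $0 < a < b < M$. Since $A$ is continuous, dips below $a$ (by the liminf statement) and exceeds $b$ infinitely often, one can extract pairwise disjoint intervals $[s_k, t_k]$, with $s_k \to +\infty$, on which $A$ rises from $A(s_k) = a$ to $A(t_k) = b$ while staying in $[a,b]$. The key step is then to exploit these intervals in two ways simultaneously. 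On the one hand, $A \ge a$ on each $[s_k,t_k]$ gives $a^p (t_k - s_k) \le \int_{s_k}^{t_k} A(t)^p\, dt$, and summing over the disjoint intervals against $\|A\|_{L^p}^p < +\infty$ forces $\sum_k (t_k - s_k) < +\infty$, so in particular $t_k - s_k \to 0$. On the other hand, the fundamental theorem of calculus for absolutely continuous functions together with $\dot A \le B \le B^+$ yields $0 < b - a = A(t_k) - A(s_k) = \int_{s_k}^{t_k} \dot A(t)\, dt \le \int_{s_k}^{t_k} B^+(t)\, dt$, and Hölder's inequality bounds the right-hand side by $\|B^+\|_{L^r([s_k,t_k])}\,(t_k - s_k)^{1 - 1/r}$.

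Combining the two estimates produces the contradiction. Rearranging the Hölder bound gives $\int_{s_k}^{t_k} (B^+(t))^r\, dt \ge (b-a)^r (t_k - s_k)^{-(r-1)}$; since the intervals are disjoint and $(B^+)^r \le |B|^r$, the left-hand sides sum to at most $\|B\|_{L^r}^r < +\infty$, so $\int_{s_k}^{t_k}(B^+)^r \to 0$. When $r > 1$, however, $t_k - s_k \to 0$ makes the right-hand side blow up, a contradiction. When $r = 1$ the Hölder step degenerates to $b - a \le \int_{s_k}^{t_k} B^+$, and summing over the disjoint intervals gives $\sum_k (b-a) \le \|B\|_{L^1} < +\infty$, which is again absurd. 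Hence $\limsup_{t\to+\infty} A(t) = 0$, completing the proof. The only delicate point requiring care is the clean extraction of the disjoint rising intervals, which is handled by taking $t_k$ to be a crossing of level $b$ and $s_k = \sup\{\tau < t_k : A(\tau) = a\}$ the preceding last visit to level $a$.
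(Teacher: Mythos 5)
Your argument is correct and complete. Note, however, that the paper does not actually prove this lemma: it is quoted verbatim from \cite[Lemma 5.2]{AAS2014} and used as a black box, so there is no in-paper proof to compare against. Your proposal therefore supplies a self-contained justification, and it is the natural one: the $\liminf$ part follows immediately from $A\in L^p$ and $A\ge 0$; for the $\limsup$ part, the extraction of disjoint rising intervals $[s_k,t_k]$ with $A(s_k)=a$, $A(t_k)=b$ and $A\ge a$ on $[s_k,t_k]$ is sound (take $s_k$ as the last visit to level $a$ before a crossing of level $b$; the upper bound $A\le b$ on the interval is never actually needed), the $L^p$ bound correctly forces $\sum_k(t_k-s_k)<+\infty$, and the combination of the fundamental theorem of calculus for locally absolutely continuous functions with H\"older's inequality on $B^+$ yields the stated contradiction in both the cases $r>1$ and $r=1$. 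The only point worth making explicit in a written version is that local absolute continuity is exactly what licenses $A(t_k)-A(s_k)=\int_{s_k}^{t_k}\dot A$, which you do invoke.
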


\begin{Lemma}\label{OpialContinuous} 
	Let $\Omega \subseteq H$ be a nonempty set and $x : [0, +\infty) \to H $ a given map. Assume
	that
	\begin{itemize}
		\item[(i)] for every $x^* \in \Omega$ the limit $\lim_{t \to +\infty} \|x(t)-x^*\|$ exists;
		\item[(ii)] every weak sequential cluster point of the map $x$ belongs to $\Omega$.
	\end{itemize}
	Then there exists an element $x^{\infty} \in \Omega$ such that $x(t)$ converges weakly to $x^{\infty}$ as $t \rightarrow +\infty$.
\end{Lemma}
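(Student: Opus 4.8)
The plan is to follow the classical Opial argument adapted to the continuous setting, reducing everything to sequences so that the weak sequential compactness of bounded sets in the Hilbert space $H$ can be invoked; note that no continuity of $x(\cdot)$ is ever required. First I would establish that the map $x(\cdot)$ is bounded. Since $\Omega\neq\emptyset$, fix some $x^*\in\Omega$; by hypothesis (i) the limit $\lim_{t\to+\infty}\|x(t)-x^*\|$ exists and is finite, so $t\mapsto\|x(t)-x^*\|$ is bounded, whence $x(\cdot)$ is bounded. In particular, choosing any sequence $t_n\to+\infty$, the sequence $(x(t_n))_{n\geq 0}$ is bounded and therefore admits a weakly convergent subsequence, so $x(\cdot)$ possesses at least one weak sequential cluster point, which by (ii) lies in $\Omega$.

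The key step is to show that this weak sequential cluster point is \emph{unique}. Suppose $p_1$ and $p_2$ are two weak sequential cluster points; by (ii) both belong to $\Omega$, so by (i) both $\lim_{t\to+\infty}\|x(t)-p_1\|$ and $\lim_{t\to+\infty}\|x(t)-p_2\|$ exist. Expanding
\[
\|x(t)-p_1\|^2 - \|x(t)-p_2\|^2 = 2\left\langle x(t),\, p_2-p_1\right\rangle + \|p_1\|^2 - \|p_2\|^2,
\]
I conclude that $\lim_{t\to+\infty}\left\langle x(t),\, p_2-p_1\right\rangle$ exists. Evaluating this full limit along a sequence realizing $x(t_n)\rightharpoonup p_1$ and along another realizing $x(s_n)\rightharpoonup p_2$ forces $\left\langle p_1,\, p_2-p_1\right\rangle = \left\langle p_2,\, p_2-p_1\right\rangle$, i.e. $\|p_1-p_2\|^2 = 0$, so $p_1=p_2$. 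Denote the common value by $x^\infty\in\Omega$.

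Finally, I would upgrade the property of having a unique weak sequential cluster point to the weak convergence of the whole map. If $x(t)$ did not converge weakly to $x^\infty$, there would exist $w\in H$, $\varepsilon>0$ and a sequence $t_n\to+\infty$ with $\left|\left\langle x(t_n)-x^\infty,\, w\right\rangle\right|\geq\varepsilon$ for all $n$; the boundedness of $(x(t_n))_{n\geq 0}$ then yields a weakly convergent subsequence whose limit is a weak sequential cluster point, hence equals $x^\infty$ by uniqueness, contradicting the separation by $w$. Therefore $x(t)\rightharpoonup x^\infty$ as $t\to+\infty$.

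The only delicate point is the uniqueness argument, and it is where I would expect to focus attention: it hinges on the observation that the existence of the two individual limits in (i) promotes the a priori merely subsequential limits of $t\mapsto\left\langle x(t),\, p_2-p_1\right\rangle$ to a genuine full limit, which is precisely what rules out two distinct cluster points. Everything else is the standard passage, via boundedness and weak sequential compactness, from a unique cluster point to convergence of the entire trajectory.
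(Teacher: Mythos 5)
Your proof is correct: it is the standard Opial argument --- boundedness from (i), uniqueness of the weak sequential cluster point via the identity $\|x(t)-p_1\|^2-\|x(t)-p_2\|^2 = 2\left\langle x(t), p_2-p_1\right\rangle + \|p_1\|^2-\|p_2\|^2$ combined with the existence of the full limit, and then the passage from a unique cluster point to weak convergence of the whole trajectory by weak sequential compactness. The paper itself does not prove this lemma but quotes it from \cite{AAS2014}, and your argument is precisely the classical one; the only cosmetic point is that hypothesis (i) guarantees boundedness of $t\mapsto\|x(t)-x^*\|$ only for sufficiently large $t$ (no continuity of $x(\cdot)$ is assumed), which is all that is needed since weak sequential cluster points are taken along sequences $t_n\to+\infty$.
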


We start our asymptotic analysis with two preliminary results.

\begin{Proposition}\label{DSProperty} 
	Assume that the solution set $\Omega$ is nonempty, $F$ is pseudo-monotone on $C$ and Lipschitz continuous with constant $L>0$. Then for every  solution $x^* \in \Omega$ it holds
	\begin{equation*}\label{DSdeceasing}
	\left\langle \dot{x}(t), x(t)-x^*\right\rangle  \leq -\left( 1-\lambda L\right) \|x(t)-y(t)\|^2 \leq 0 \quad \forall t \in [0,+\infty).
	\end{equation*}
\end{Proposition}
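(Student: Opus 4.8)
The plan is to introduce nothing new analytically but simply to substitute the governing equation of \eqref{DS} and then peel off the two structural inputs---the projection characterization \eqref{projectionProperty} and the pseudo-monotonicity of $F$---until only a Lipschitz estimate remains. Throughout I abbreviate $x=x(t)$ and $y=y(t)$.

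First I would read off from the second line of \eqref{DS} that $\dot{x}=y-x+\lambda\bigl(F(x)-F(y)\bigr)$, and rewrite this in the suggestive form $\dot{x}=-\bigl(x-\lambda F(x)-y\bigr)-\lambda F(y)$. The purpose of this regrouping is that the bracketed vector $u:=x-\lambda F(x)-y$ is exactly the quantity controlled by \eqref{projectionProperty} applied at the point $x-\lambda F(x)$ (whose projection onto $C$ is $y$): since $x^*\in\Omega\subseteq C$, this yields $\langle u, x^*-y\rangle\le 0$, equivalently $\langle u, y-x^*\rangle\ge 0$.

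Taking the inner product of $\dot{x}$ with $x-x^*$ and splitting $x-x^*=(x-y)+(y-x^*)$, the term $-\langle u, y-x^*\rangle$ is nonpositive by the projection inequality and can be dropped, while $-\langle u, x-y\rangle$ expands, using $u=(x-y)-\lambda F(x)$, to $-\|x-y\|^2+\lambda\langle F(x), x-y\rangle$. For the remaining piece $-\lambda\langle F(y), x-x^*\rangle$ I again split along $(x-y)+(y-x^*)$; here the crucial step is to show $\langle F(y), y-x^*\rangle\ge 0$, so that this contribution is also nonpositive and can be discarded. This is where pseudo-monotonicity enters: because $x^*$ solves VI($F,C$) and $y\in C$ we have $\langle F(x^*), y-x^*\rangle\ge 0$, and then pseudo-monotonicity of $F$ on $C$, applied to the pair $(x^*,y)$, upgrades this to $\langle F(y), y-x^*\rangle\ge 0$.

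Collecting the surviving terms leaves $\langle\dot{x}, x-x^*\rangle\le -\|x-y\|^2+\lambda\langle F(x)-F(y), x-y\rangle$, and a single application of Cauchy--Schwarz together with Lipschitz continuity, $\lambda\langle F(x)-F(y), x-y\rangle\le\lambda L\|x-y\|^2$, yields the claimed bound $-(1-\lambda L)\|x-y\|^2$; the terminal inequality $\le 0$ is then immediate under the stepsize restriction $\lambda\le 1/L$. The main obstacle I anticipate is conceptual rather than computational: since $x(t)$ generally does \emph{not} lie in $C$, pseudo-monotonicity cannot be applied to the pair $(x,x^*)$ directly, so the entire argument must be routed through the projected iterate $y\in C$. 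This is precisely what the rewriting of $\dot{x}$ and the two splittings along $(x-y)+(y-x^*)$ are engineered to enable.
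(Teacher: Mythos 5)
Your argument is correct and is essentially the paper's own proof: both rest on the projection inequality \eqref{projectionProperty} tested at $x^*$, pseudo-monotonicity to upgrade $\langle F(x^*),y-x^*\rangle\ge 0$ to $\langle F(y),y-x^*\rangle\ge 0$, and a final Cauchy--Schwarz/Lipschitz estimate, with only the bookkeeping reorganized (you substitute the ODE first and drop the two nonpositive terms; the paper combines the two inequalities first and then identifies $-\dot{x}(t)$). Your closing observation that the last inequality $\le 0$ silently uses $\lambda\le 1/L$ is accurate -- the paper's statement omits this hypothesis as well.
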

\begin{proof}
	Since $x^* \in \Omega$ and $y(t) \in C$ it holds	
	$$
	\left\langle F(x^*), y(t)-x^*\right\rangle \geq 0 \quad \forall t \in [0,+\infty).
	$$
	By the pseudo-monotonicity of $F$ it holds
\begin{equation}\label{DS1}
	\left\langle F(y(t)), y(t)-x^*\right\rangle \geq 0 \quad \forall t \in [0,+\infty).
	\end{equation}
	On the other hand, since $y(t)=P_C(x(t) - \lambda F(x(t)))$, we obtain from 	\eqref{projectionProperty} that 
	\begin{equation}\label{DS2}
	\left\langle x(t) - \lambda F(x(t))-y(t), y(t)-x^*\right\rangle \geq 0 \quad \forall t \in [0,+\infty).
	\end{equation}		
Combining \eqref{DS1} and \eqref{DS2} we obtain for every $t \in [0,+\infty)$
	$$
	\left\langle x(t) - y(t)-\lambda \left[ F(x(t))-F(y(t))\right] , y(t)-x^*\right\rangle \geq 0
	$$
	or, equivalently, by taking into account the formulation of the dynamical system \eqref{DS}
	$$
	\left\langle x(t) - y(t)-\lambda \left[ F(x(t))-F(y(t))\right] , y(t)-x(t)\right\rangle 
	- \left\langle \dot{x}(t), x(t)-x^*\right\rangle  \geq 0.
	$$
	This implies that
	\begin{align*}
	\left\langle \dot{x}(t), x(t)-x^*\right\rangle 
	& \ \leq \left\langle x(t) - y(t)-\lambda \left[ F(x(t))-F(y(t))\right] , y(t)-x(t)\right\rangle \\
	& \ =-\|x(t)-y(t)\|^2+\lambda \left\langle  F(x(t))-F(y(t)), x(t)-y(t)\right\rangle\\
	& \ \leq  -\left( 1-\lambda L\right) \|x(t)-y(t)\|^2 \quad \forall t \in [0,+\infty).
	\end{align*}	
\end{proof}

\begin{Proposition}\label{DSProperty2} 
	Assume that the solution set $\Omega$ is nonempty, $F$ is pseudo-monotone on $C$ and Lipschitz continuous with constant $L>0$, and $0 < \lambda < \frac{1}{L}$. Then, for every  solution $x^* \in \Omega$,  the function 
	$t \to \|x(t)-x^*\|^2$ is nonincreasing and it holds
	$$
	\int_{0}^{+\infty} \|x(t)-y(t)\|^2 dt < +\infty \ \mbox{and} \
	\lim_{t \to +\infty} \|x(t)-y(t)\| = 0. 
	$$
\end{Proposition}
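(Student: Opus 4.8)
The plan is to derive both assertions from Proposition~\ref{DSProperty} together with the continuous Fej\'er-type Lemma~\ref{FejerContinuous}. For the monotonicity claim, I would note that $\frac{d}{dt}\|x(t)-x^*\|^2 = 2\left\langle \dot{x}(t), x(t)-x^*\right\rangle$ for every $t \in [0,+\infty)$, so that Proposition~\ref{DSProperty} combined with $0 < \lambda < \frac{1}{L}$ (hence $1-\lambda L > 0$) immediately gives
\[
\frac{d}{dt}\|x(t)-x^*\|^2 \leq -2(1-\lambda L)\|x(t)-y(t)\|^2 \leq 0 \quad \forall t \in [0,+\infty),
\]
which shows that $t \mapsto \|x(t)-x^*\|^2$ is nonincreasing.

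For the integrability statement, I would integrate the same differential inequality over an arbitrary interval $[0,T]$ to obtain
\[
2(1-\lambda L)\int_{0}^{T}\|x(t)-y(t)\|^2\,dt \leq \|x(0)-x^*\|^2 - \|x(T)-x^*\|^2 \leq \|x_0-x^*\|^2.
\]
Since $1-\lambda L > 0$, letting $T \to +\infty$ yields the finiteness of $\int_{0}^{+\infty}\|x(t)-y(t)\|^2\,dt$; equivalently, the function $A(t) := \|x(t)-y(t)\|^2$ belongs to $L^1([0,+\infty))$.

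The heart of the proof is the limit $\lim_{t\to+\infty}\|x(t)-y(t)\|=0$, which I would obtain by applying Lemma~\ref{FejerContinuous} to $A(t)$ with $p=r=1$. The hypothesis $A \in L^1$ is already established, so it remains to produce a differential inequality $\frac{d}{dt}A(t) \leq B(t)$ with $B \in L^1$. I would first control the velocities: from the second line of \eqref{DS} and the Lipschitz continuity of $F$ one gets $\|\dot{x}(t)\| \leq (1+\lambda L)\|x(t)-y(t)\|$. Since $P_C$ is nonexpansive and $F$ is $L$-Lipschitz, the map $y(\cdot)=P_C(x(\cdot)-\lambda F(x(\cdot)))$ is locally Lipschitz whenever $x$ is, with $\|\dot{y}(t)\| \leq (1+\lambda L)\|\dot{x}(t)\|$ for almost every $t$; in particular $A$ is locally absolutely continuous. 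Differentiating and using the Cauchy--Schwarz inequality,
\[
\frac{d}{dt}A(t) = 2\left\langle x(t)-y(t), \dot{x}(t)-\dot{y}(t)\right\rangle \leq 2(2+\lambda L)\|x(t)-y(t)\|\,\|\dot{x}(t)\| \leq 2(2+\lambda L)(1+\lambda L)\|x(t)-y(t)\|^2,
\]
so that $\frac{d}{dt}A(t) \leq C\,A(t) =: B(t)$ with $C := 2(2+\lambda L)(1+\lambda L)$. Because $A \in L^1$, also $B = C A \in L^1$, and Lemma~\ref{FejerContinuous} delivers $\lim_{t\to+\infty}A(t)=0$, which is the desired conclusion.

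I expect the main obstacle to be the regularity of $y(\cdot)$ and the estimate $\|\dot{y}(t)\| \leq (1+\lambda L)\|\dot{x}(t)\|$: the projection $P_C$ is only nonexpansive and not smooth, so $y$ need not be of class ${\cal C}^1$, and one has to argue almost-everywhere differentiability from the local Lipschitz continuity inherited by $y$ from the ${\cal C}^1$ regularity of $x$ established in \cite{BB18}, before the chain-rule computation of $\frac{d}{dt}A(t)$ is justified.
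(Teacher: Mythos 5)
Your argument is correct and follows essentially the same route as the paper: the monotonicity and integrability both come from Proposition~\ref{DSProperty} by differentiation and integration of $\|x(t)-x^*\|^2$, and the limit is obtained by bounding $\|\dot{x}(t)\|$ and $\|\dot{y}(t)\|$ in terms of $\|x(t)-y(t)\|$ (your constant $2(2+\lambda L)(1+\lambda L)$ equals the paper's $2(1+\lambda L+(1+\lambda L)^2)$) and then invoking Lemma~\ref{FejerContinuous}. The regularity point you flag about $y(\cdot)$ is handled in the paper exactly as you propose, by observing that $y=P_C\circ(I-\lambda F)\circ x$ is locally absolutely continuous as a Lipschitz image of the ${\cal C}^1$ trajectory $x$.
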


\begin{proof}
	Using Proposition \ref{DSProperty}, for every $t \in [0,+\infty)$ it holds
	$$
	\frac{1}{2}\frac{d}{dt} \|x(t)-x^*\|^2=\left\langle x(t)-x^*, \dot{x}(t) \right\rangle 
	\leq  -\left( 1-\lambda L\right) \|x(t)-y(t)\|^2 \leq 0,
	$$
	which shows that $t \to \|x(t)-x^*\|^2$ is nonincreasing. Let be $T >0$. Integrating the previous inequality from $0$ to $T$ it yields 
	$$
	\left( 1-\lambda L\right) \int_{0}^{T}  \|x(t)-y(t)\|^2 dt 
	\leq \frac{1}{2} \left( \|x(0)-x^*\|^2-\|x(T)-x^*\|^2\right) \leq  \frac{1}{2} \|x(0)-x^*\|^2.
	$$
	Letting $T \rightarrow +\infty$, it follows that $\int_{0}^{+\infty} \|x(t)-y(t)\|^2 dt < +\infty$.	
	
Since $P_C$ is nonexpansive and $F$ is Lipschitz continuous with constant $L$, we get that $P_C \circ (I-\lambda F)$ is Lipschitz continuous with constant $1+\lambda L$. Using that
$$y(t) = P_C \circ (I-\lambda F)(x(t)) \quad \forall t \in [0,+\infty),$$
if follows that the trajectory $y$ is locally absolutely continuous and that for almost every $t\in [0,+\infty)$ it holds
	$$
	\|\dot{y}(t)\| \leq (1+\lambda L) \|\dot{x}(t)\|. 
	$$
	On the other hand,
	$$
	\|\dot{x}(t)\| = \|x(t)-y(t)-\lambda\left[ F(x(t))-F(y(t))\right]\| \leq  (1+\lambda L) \|x(t)-y(t)\| \quad \forall t \in [0,+\infty). 
	$$
	Thus, for almost every $t\in [0,+\infty)$,
	\begin{eqnarray*}
		\frac{d}{dt} \|x(t)-y(t)\|^2 
		&=& 2\left\langle x(t)-y(t), \dot{x}(t)-\dot{y}(t)\right\rangle \\
		&\leq& 2\left(\|\dot{x}(t)\| +\|\dot{y}(t)\|  \right) \|x(t)-y(t)\|\\
		&\leq& 2\left(1+\lambda L+(1+\lambda L)^2 \right) \|x(t)-y(t)\|^2.
	\end{eqnarray*}
From here, according to Lemma \ref{FejerContinuous}, we obtain
	\begin{align*}
	\lim_{t \to +\infty} \|x(t)-y(t)\| = 0. 
	\end{align*}
\end{proof}

We come now to the main theorem of this section.
\begin{Theorem}\label{Main result DS} 
	Assume that the solution set $\Omega$ \!is nonempty, \!$F$ \!\! is pseudo-monotone on $H$, Lipschitz continuous with constant $L>0$ and sequentially weak-to-weak continuous, and $0 < \lambda < \frac{1}{L}$. Then the trajectories $x(t)$ and $y(t)$ generated by \eqref{DS} converge  weakly
	to a solution of {\rm VI($F, C$)} as $t \rightarrow +\infty$.
\end{Theorem}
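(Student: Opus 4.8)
The plan is to verify the two hypotheses of the continuous Opial lemma (Lemma \ref{OpialContinuous}) with the solution set $\Omega$. Hypothesis (i) is already in hand: Proposition \ref{DSProperty2} shows that for each $x^* \in \Omega$ the map $t \mapsto \|x(t) - x^*\|^2$ is nonincreasing, hence bounded and convergent, so $\lim_{t\to+\infty}\|x(t)-x^*\|$ exists; this also makes the trajectory bounded, so weak sequential cluster points exist. All the work therefore goes into hypothesis (ii). So I fix a weak sequential cluster point $\bar x$ of $x(\cdot)$, i.e. a sequence $t_n \to +\infty$ with $x(t_n) \rightharpoonup \bar x$, and aim to show $\bar x \in \Omega$. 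First I would record that $\bar x \in C$: since $\|x(t_n)-y(t_n)\| \to 0$ by Proposition \ref{DSProperty2}, the sequence $y(t_n)$ also converges weakly to $\bar x$, and as $y(t_n) \in C$ with $C$ convex and closed (hence weakly closed) we get $\bar x \in C$.

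Next I would extract a limiting variational inequality from the projection step. Applying the projection inequality \eqref{projectionProperty} to $y(t)=P_C(x(t)-\lambda F(x(t)))$ gives, for every $z \in C$, $\langle x(t)-y(t), z - y(t)\rangle \le \lambda\langle F(x(t)), z - y(t)\rangle$. Writing $z - y(t) = (z - x(t)) + (x(t)-y(t))$ and using that $\|x(t_n)-y(t_n)\| \to 0$ while $(F(x(t_n)))_n$ is bounded (the trajectory is bounded and $F$ is Lipschitz), both $\tfrac1\lambda\langle x(t_n)-y(t_n), z - y(t_n)\rangle$ and $\langle F(x(t_n)), x(t_n)-y(t_n)\rangle$ tend to $0$; passing to the limit I obtain $\liminf_{n\to+\infty}\langle F(x(t_n)), z - x(t_n)\rangle \ge 0$ for every $z \in C$.

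The crux is to upgrade this $\liminf$ inequality to the Minty inequality $\langle F(z), z - \bar x\rangle \ge 0$ for all $z \in C$, after which Proposition \ref{Minty_Lemma} yields $\bar x \in \Omega$ and Lemma \ref{OpialContinuous} finishes the proof (the weak limit of $y$ coinciding with that of $x$ because $\|x(t)-y(t)\|\to 0$). The obstacle is the cross term $\langle F(x(t_n)), x(t_n)-\bar x\rangle$: since $F$ is only assumed sequentially weak-to-weak continuous (not weak-to-strong), both $F(x(t_n)) \rightharpoonup F(\bar x)$ and $x(t_n)-\bar x \rightharpoonup 0$ are merely weak, and the pairing of two weakly convergent sequences need not pass to the limit, so one cannot simply let $n\to+\infty$ in $\langle F(x(t_n)), z - x(t_n)\rangle$. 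Here I would exploit that $F$ is pseudo-monotone on all of $H$ (not just on $C$). Fixing $z \in C$ and a sequence $\epsilon_k \downarrow 0$, for each $k$ I choose an increasing index $N_k$ with $\langle F(x(t_{N_k})), z - x(t_{N_k})\rangle > -\epsilon_k$; in the generic situation where $\|F(x(t_{N_k}))\|$ is bounded away from $0$ I set $z_k := z + \frac{\epsilon_k}{\|F(x(t_{N_k}))\|^2}F(x(t_{N_k})) \in H$, so that $\langle F(x(t_{N_k})), z_k - x(t_{N_k})\rangle \ge 0$. Pseudo-monotonicity on $H$ then gives $\langle F(z_k), z_k - x(t_{N_k})\rangle \ge 0$, and since $z_k \to z$ strongly (as $\epsilon_k/\|F(x(t_{N_k}))\| \to 0$), $F(z_k) \to F(z)$ by Lipschitz continuity while $z_k - x(t_{N_k}) \rightharpoonup z - \bar x$; the strong--weak pairing passes to the limit and yields $\langle F(z), z - \bar x\rangle \ge 0$. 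The degenerate case $\liminf_k \|F(x(t_{N_k}))\| = 0$ is handled separately: along a subsequence $F(x(t_{N_k})) \to 0$ strongly, and weak-to-weak continuity forces $F(\bar x)=0$, so the Minty inequality holds trivially. This is precisely the step where weak-to-weak continuity (rather than the stronger weak-to-strong continuity used in earlier works) is needed, and isolating it is the main difficulty of the proof.
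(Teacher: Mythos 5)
Your proposal is correct and follows essentially the same route as the paper's proof: Opial's continuous lemma with hypothesis (i) supplied by Proposition \ref{DSProperty2}, the projection inequality yielding a $\liminf$ inequality along the cluster sequence, and then the perturbation $z_k = z + \epsilon_k F(\cdot)/\|F(\cdot)\|^2$ combined with pseudo-monotonicity on all of $H$ and the strong--weak pairing to reach the Minty inequality, with the degenerate case $F(\bar x)=0$ (where weak-to-weak continuity enters) treated separately exactly as in the paper. The only cosmetic differences are that you phrase the argument in terms of $F(x(t_n))$ rather than $F(y(t_n))$ and organize the case split on $\liminf_k\|F(x(t_{N_k}))\|$ a posteriori instead of assuming $F(\bar x)\neq 0$ up front; both are immaterial since $\|x(t_n)-y(t_n)\|\to 0$.
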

\begin{proof}
	Let $\hat{x} \in H$ be a weak sequential cluster point of $x(t)$ as $t \to +\infty$ and $(t_n)_{n \geq 0}$ be a sequence in $[0,+\infty)$ with $t_n \to +\infty$ and $x(t_n) \rightharpoonup \hat{x}$ as $n \to +\infty$. 
	Since $\lim_{t \to +\infty} \|x(t)-y(t)\| = 0$, we also have $y(t_n) \rightharpoonup \hat{x}$ as $n \to +\infty$. Furthermore, since $F$ is Lipschitz continuous,
	$\|F(x(t_n))-F(y(t_n))\| \to 0$ as $n \to +\infty$. We will prove that $\hat{x} \in \Omega$. 
	For convenience, we denote $x_n := x(t_n)$ and $y_n := y(t_n)$ for every $n \ge 0$. 	
	Since $(y_n)_{n \geq 0} \subseteq C$ and $C$ is weakly closed, we have $\hat{x} \in C$. 
	We assume that $F(\hat x) \neq 0$, otherwise the conclusion follows automatically.
	
Let $y \in C$ be fixed. For every $n \geq 0$ we have
	$$y_{n} =P_C(x_{n}-\lambda F(x_{n})),$$
	thus
	$$
	\left\langle x_{n}-\lambda F(x_{n})-y_{n}, y-y_{n} \right\rangle \leq 0
	$$
	or, equivalently,
	\begin{equation}\label{ine-1}
	\frac{1}{\lambda}\left\langle x_{n}-y_{n}, y-y_{n} \right\rangle 
	\leq   \left\langle  F(x_{n})-F(y_{n}),y-y_{n}  \right \rangle
	+ \left\langle  F(y_{n}),y-y_{n}  \right \rangle.
	\end{equation}
	Letting  in the last inequality $n \to +\infty$ and  taking into account that
	$\lim_{n \to +\infty}\|x_{n}-y_{n} \|=0$,
	$
	\lim_{n \to +\infty} \|F(x_{n})-F(y_{n})\|=0
	$ and $(y_{n})_{n \geq 0}$ is bounded,
	it follows
	\begin{equation*}\label{ine1}
	\liminf_{n \to +\infty} \left\langle  F(y_{n}),y-y_{n}  \right \rangle \geq 0.
	\end{equation*}
On the other hand, we have that $(y_{n})_{n \geq 0} $ converges weakly to $\hat{x}$ as $n \to +\infty$. Since $F$ is sequentially weak-to-weak continuous, $(F(y_{n}))_{n \geq 0}$ converges weakly to $F(\hat{x})$ as $n \to +\infty$. Since the norm mapping is sequentially weakly lower semicontinuous, we have 
	$$
	0 < \|F(\hat{x}) \| \leq \liminf_{n \to +\infty} \|F(y_{n})\|.
	$$
Then there exists $n_{-1} \geq 0$ such that $F(y_n) \neq 0$ for all $n \geq n_{-1}$.

Let $(\epsilon_k)_{k \geq 0}$ be a positive strictly decreasing sequence which converges to $0$ as $k \rightarrow +\infty$. 

Since $\sup_{N \geq 0} \inf_{n \geq N} \left\langle  F(y_{n}),y-y_{n}  \right \rangle = \liminf_{n \to +\infty} \left\langle  F(y_{n}),y-y_{n}  \right \rangle  > -\epsilon_0,$ there exists $N_0 \geq 0$ such that $\inf_{n \geq N_0} \left\langle  F(y_{n}),y-y_{n}  \right \rangle > -\epsilon_0$. Taking $n_0 > \max\{N_0, n_{-1}\}$, we have
$$\left\langle  F(y_{n_{0}}),y-y_{n_{0}} \right \rangle + \epsilon_0  \geq 0 \ \mbox{and} \ F(y_{n_{0}}) \not= 0.$$
We can continue this construction inductively and assume to this end that $n_0 < n_1 < ... < n_k$ are given. Then there exists $N_{k+1} \geq 0$ such that $\inf_{n \geq N_{k+1}} \left\langle  F(y_{n}),y-y_{n}  \right \rangle > -\epsilon_{k+1}$. Taking $n_{k+1} > \max\{N_{k+1}, n_{k}\}$, we have
$$\left\langle  F(y_{n_{k+1}}),y-y_{n_{k+1}} \right \rangle + \epsilon_{k+1}  \geq 0 \ \mbox{and} \ F(y_{n_{k+1}}) \not= 0.$$
In this way we obtain a strictly increasing sequence $(n_k)_{k \geq 0}$ with the property that 
	\begin{equation}\label{ine2}
	\left\langle  F(y_{n_{k}}),y-y_{n_{k}} \right \rangle + \epsilon_k  \geq 0 \ \mbox{and} \ F(y_{n_{k}}) \not= 0 \quad \forall k \geq 0.
	\end{equation}
	Setting for every $k \geq 0$
	$$
	z_{k}:=\frac{F(y_{n_{k}})}{\|F(y_{n_{k}})\|^2},
	$$
	it holds $ \left\langle  F(y_{n_{k}}),z_{k}  \right \rangle=1$. According to \eqref{ine2} we have that
	$$
	\left\langle  F(y_{n_{k}}),y+\epsilon_k z_{k} -y_{n_{k}}  \right \rangle\geq 0 \quad \forall k \geq 0.
	$$
	Since $F$ is pseudo-monotone on $H$, it yields
	\begin{equation}\label{ine3}
	\left\langle  F(y+\epsilon_k z_{k}), y+\epsilon_k z_{k} -y_{n_{k}}  \right \rangle\geq 0 \quad \forall k \geq 0.
	\end{equation}
Using that $(F(y_{n_{k}}))_{k \geq 0}$ is bounded we have
	$$
	\lim_{k \to +\infty} \|\epsilon_k z_{k}\| = \lim_{k \to +\infty} \frac{\epsilon_k}{\|F(y_{n_k})\|} =0.
	$$
	Taking  in \eqref{ine3} the limit as $k \to +\infty$ we obtain 
	$$
	\left\langle  F(y),y-\hat{x}  \right \rangle \geq 0.
	$$
	As $y$ was arbitrarily chosen in $C$, it follows from Proposition \ref{Minty_Lemma} that $\hat{x} \in \Omega $.
	
On the other hand, by Proposition \ref{DSProperty2}, for every $x^* \in \Omega$, $\|x(t)-x^*\|$ converges as $t \rightarrow +\infty$. Thus, according to
	the Lemma \ref{OpialContinuous}, $x(t)$ converges weakly to an element of
	$\Omega$ as $t \rightarrow +\infty$.
	Since, due to Proposition \ref{DSProperty2}, we have that
	$$
	\lim_{t \to +\infty} \|x(t)-y(t)\| = 0,
	$$
	it follows that $y(t)$ converges weakly to the same element of
	$\Omega$ as $t \rightarrow +\infty$. 	
\end{proof}

The following example introduces a class of operators which are pseudo-monotone, Lipschitz continuous and sequentially weak-to-weak continuous on $H$, but are not necessarily monotone.

\begin{Example} \label{Exam1}\rm
Let $F: H \rightarrow H$  be defined as
\begin{equation*}
	F(x):= g(x)(Mx+p),
	\end{equation*}
	where $M: H \rightarrow H$  is a linear bounded operator satisfying 
	\begin{equation*}\label{Linear_Coercivity}
	\langle Mx, x\rangle\geq 0 \quad \forall x\in H,
	\end{equation*}
	$p \in H$, and $g: H \to (0,+\infty)$ is a function taking positive values. 

Such operators have been considered in \cite{Bianchi} in the case when $H$ is a finite dimensional  and $M$ is a skew operator, i.e. 
	$\left\langle Mx, x\right\rangle = 0$ for every $x\in H$, under the name \emph{pseudo-affine operators}. In general $F$ is not monotone, see \cite{Bianchi}. This fact is reflected by Figure \ref{Fig1}  in the case when $H=\mathbb{R}$.

We show that $F$ is pseudo-monotone on $H$. Indeed, let $x,y\in H$ be such that $\langle F(x), y-x \rangle\geq 0$. Since
	$g(x)>0$, we have
	$$
	\langle Mx+p, y-x\rangle\geq 0.
	$$
	Hence
	\begin{align*}\label{Pseudo-Exam}
	\nonumber \langle F(y), y-x\rangle&= \ g(y)\langle My+p, y-x\rangle \geq g(y)(\langle My+p, y-x\rangle-\langle Mx+p, y-x\rangle)\\
	&= \ g(y)\langle M(y-x), y-x\rangle \geq 0,
	\end{align*}	
which leads to the desired conclusion.

Since every linear bounded operator $M : H \rightarrow H$ is sequentially weak-to-weak continuous, the operator $F$ is sequentially weak-to-weak continuous if $g$ is weakly continuous. This is for instance the case when $g$ has the expression $g(x):=\eta(\left\langle a,x\right\rangle )$ for a fixed vector $a\in H$ and a continuous function $\eta: \mathbb{R} \to (0,+\infty)$.  

In addition, for some choices of $H$,  $a$ and  $\eta$ the operator $F$ is Lipschitz continuous. Indeed, for $H=\ell_2$, $a = e_1= (1,0,0, ...) \in \ell_2$,  $\eta(t) = e^{-t^2}$, 
$$M : \ell_2 \rightarrow \ell_2, M(x_1, x_2, ...) = (x_1, 0, 0, ... ), \ \mbox{and} \ p = 0 \in \ell_2,$$ the operator $F : \ell_2 \rightarrow \ell_2, F(x_1, x_2, ....) = (x_1 e^{-x_1^2}, 0, 0, ...)$, is Lipschitz continuous. This follows by the Mean Value Theorem, since it is easy to see by direct computation that there exists $L >0$ such that $\|\nabla F (x)\| \leq L$  for every $x \in \ell_2$. We illustrate in 
Figure \ref{Fig1} this choice of $F$. 
	
	\begin{figure}[!htbp]
		\begin{centering}
			\includegraphics[width=1\textwidth]{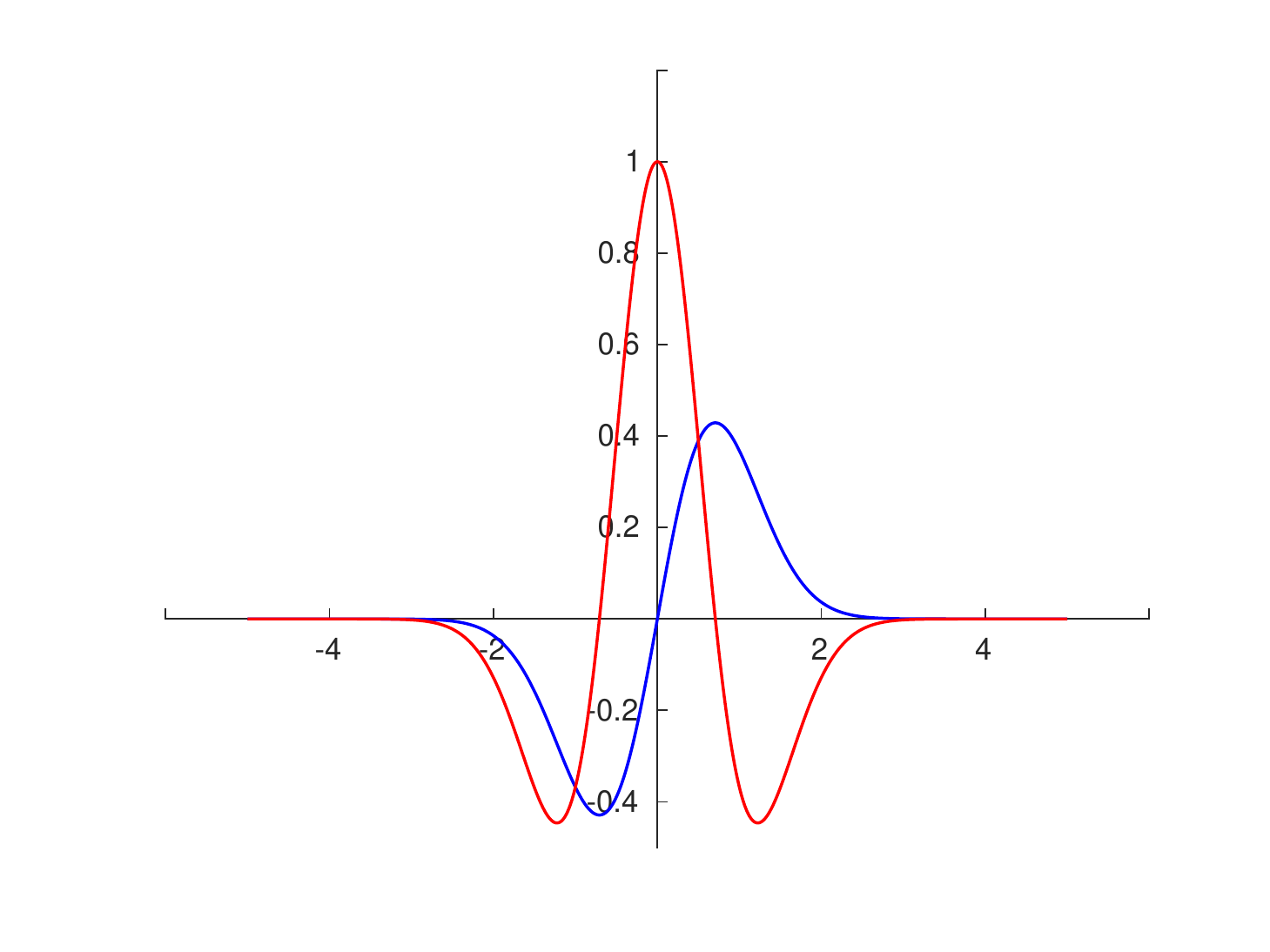}
			\protect\caption{The graph of $F : \mathbb{R} \rightarrow  \mathbb{R}, F(x)=xe^{-x^2}$, is in blue and the graph of $\nabla F : \mathbb{R} \rightarrow  \mathbb{R}, \nabla F(x)=(1-2x^2)e^{-x^2}$, is in red.}\label{Fig1}
			\par\end{centering}
	\end{figure}
\end{Example}

For the important particular case of strongly pseudo-monotone operators we will show
exponential convergence of the trajectories to the unique solution of VI($F, C$). To this end we need the following lemma.

\begin{Lemma}\label{ErrorBound}
Assume that $F$ is $\gamma$-strongly pseudo-monotone on $C$ with $\gamma >0$ and Lipschitz continuous with constant $L>0$.
Then for every $t\in [0,+\infty)$ we have
$$
\|x(t)-x^*\| \leq \frac{ 1+\lambda L+\lambda \gamma}{\lambda \gamma}\|x(t)-y(t)\|.
$$
\end{Lemma}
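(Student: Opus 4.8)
The plan is to exploit the unique solution $x^*$ of VI($F,C$), which exists by strong pseudo-monotonicity, and to estimate $\|x(t)-x^*\|$ by inserting the projection point $y(t)$ as an intermediate quantity. First I would write $\|x(t)-x^*\| \leq \|x(t)-y(t)\| + \|y(t)-x^*\|$, so that everything reduces to bounding $\|y(t)-x^*\|$ by a multiple of $\|x(t)-y(t)\|$.

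The key step is to produce a lower bound on $\|y(t)-x^*\|^2$ of strongly-monotone type. Since $x^*\in\Omega$ and $y(t)\in C$, we have $\langle F(x^*),y(t)-x^*\rangle\geq 0$, and the $\gamma$-strong pseudo-monotonicity of $F$ then yields
$$
\langle F(y(t)), y(t)-x^*\rangle \geq \gamma\|y(t)-x^*\|^2 \quad \forall t\in[0,+\infty).
$$
On the other hand, the projection characterization \eqref{projectionProperty} applied to $y(t)=P_C(x(t)-\lambda F(x(t)))$ with the point $x^*\in C$ gives
$$
\langle x(t)-\lambda F(x(t))-y(t), x^*-y(t)\rangle \leq 0,
$$
i.e. $\langle x(t)-y(t), y(t)-x^*\rangle \geq \lambda\langle F(x(t)), y(t)-x^*\rangle$. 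I would then combine these two facts by writing $F(x(t)) = F(y(t)) - (F(y(t))-F(x(t)))$, so that
$$
\langle x(t)-y(t), y(t)-x^*\rangle \geq \lambda\langle F(y(t)), y(t)-x^*\rangle - \lambda\langle F(y(t))-F(x(t)), y(t)-x^*\rangle \geq \lambda\gamma\|y(t)-x^*\|^2 - \lambda\langle F(y(t))-F(x(t)), y(t)-x^*\rangle.
$$

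To finish I would bound the two remaining inner products above by Cauchy--Schwarz and Lipschitz continuity: $\langle x(t)-y(t),y(t)-x^*\rangle \leq \|x(t)-y(t)\|\,\|y(t)-x^*\|$ and $\langle F(y(t))-F(x(t)), y(t)-x^*\rangle \leq L\|x(t)-y(t)\|\,\|y(t)-x^*\|$. Substituting these into the chain above produces
$$
\lambda\gamma\|y(t)-x^*\|^2 \leq (1+\lambda L)\|x(t)-y(t)\|\,\|y(t)-x^*\|,
$$
and dividing through by $\|y(t)-x^*\|$ (the case $y(t)=x^*$ being trivial, since then $\|x(t)-x^*\|=\|x(t)-y(t)\|$ satisfies the bound outright) gives $\|y(t)-x^*\| \leq \frac{1+\lambda L}{\lambda\gamma}\|x(t)-y(t)\|$. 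Adding back $\|x(t)-y(t)\|$ yields the constant $\frac{1+\lambda L+\lambda\gamma}{\lambda\gamma}$ exactly as claimed. The only delicate point is the correct sign bookkeeping when converting the projection inequality (which naturally involves $x^*-y(t)$) into a statement about $y(t)-x^*$, and the clean separation of the term $F(y(t))-F(x(t))$; these are routine once the intermediate quantity $y(t)$ is fixed, so I anticipate no genuine obstacle beyond careful algebra.
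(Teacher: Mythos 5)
Your proposal is correct and follows essentially the same route as the paper's proof: both combine the strong pseudo-monotonicity inequality $\langle F(y(t)),y(t)-x^*\rangle\geq\gamma\|y(t)-x^*\|^2$ with the projection characterization of $y(t)$ tested at $x^*$, use Lipschitz continuity and Cauchy--Schwarz to absorb the term $F(x(t))-F(y(t))$, and conclude via the triangle inequality with the identical constant. The only (harmless) difference is bookkeeping: the paper first estimates $\langle F(x(t)),x^*-y(t)\rangle$ and then invokes the projection inequality, while you substitute in the other order; you also explicitly handle the degenerate case $y(t)=x^*$, which the paper leaves implicit.
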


\begin{proof}
Let $x^* \in C$ be the unique solution to  {\rm VI($F, C$)} (see, for instance, \cite{Kim}) and $t \in [0,+\infty)$ fixed. Since $y(t) \in C$ we have 
$$
\left\langle F(x^*), y(t)-x^*\right\rangle \geq 0,
$$
which implies, according to the strong pseudo-monotonicity of $F$ on $C$, that
\begin{equation*} 
\left\langle F(y(t)), y(t)-x^*\right\rangle \geq \gamma \|y(t)-x^*\|^2.
\end{equation*}
Using the Lipschitz continuity of $F$ we get
\begin{align*}
\left\langle F(x(t)), x^*-y(t)\right\rangle 
&= \left\langle F(x(t))-F(y(t)), x^*-y(t) \right\rangle - \left\langle F(y(t)), y(t)-x^*\right\rangle\\
& \leq \|F(x(t))-F(y(t))\| \|y(t)-x^*\| -\gamma \|y(t)-x^*\|^2\\
& \leq L \|x(t)-y(t)\| \|y(t)-x^*\| -\gamma \|y(t)-x^*\|^2,
\end{align*}
which, in combination with \cref{DS2}, gives
\begin{equation*}
\begin{aligned}
\left\langle x^*-y(t), x(t)-y(t) \right\rangle 
& \leq \lambda \left\langle F(x(t)), x^*-y(t)\right\rangle \\
&\leq \lambda L \|x(t)-y(t)\| \|y(t)-x^*\| -\lambda \gamma \|y(t)-x^*\|^2
\end{aligned}
\end{equation*}
and, further,
\begin{equation*}
\begin{aligned}
\lambda \gamma \|y(t)-x^*\|^2
&\leq \lambda L \|x(t)-y(t)\| \|y(t)-x^*\|-\left\langle x^*-y(t), x(t)-y(t) \right\rangle \\
&\leq \lambda L \|x(t)-y(t)\| \|y(t)-x^*\|+\|x^*-y(t)\| \|x(t)-y(t)\|\\
&= \left( 1+\lambda L\right)  \|x(t)-y(t)\| \|y(t)-x^*\|.
\end{aligned}
\end{equation*}
This implies
$$
\|y(t)-x^*\| \leq \frac{ 1+\lambda L}{\lambda \gamma}\|x(t)-y(t)\|
$$
and, further,
\begin{equation}\label{Errorbound}
\|x(t)-x^*\| \leq \|x(t)-y(t)\|+\|y(t)-x^*\| \leq  \frac{ 1+\lambda L+\lambda \gamma}{\lambda \gamma}\|x(t)-y(t)\|.
\end{equation}
\end{proof}

\begin{Theorem}\label{Main result 2DS} 
	Assume that $F$ is $\gamma$-strongly pseudo-monotone on $C$ with $\gamma >0$ and Lipschitz continuous with constant $L>0$, and $0 < \lambda < \frac{1}{L}$.
	Then for every $t\in [0,+\infty)$ we have
	\begin{equation}\label{Exponen}
	\|x(t)-x^*\|^2 \leq \|x(0)-x^*\|^2\exp(-\alpha t),
	\end{equation}
	where $\alpha=:2(1-\lambda L)\left(\frac{\lambda \gamma}{1+\lambda L+\lambda \gamma}\right)^2 $ and $x^*$ is the unique solution of {\rm VI($F, C$)}.
\end{Theorem}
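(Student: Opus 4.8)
The plan is to derive a first-order differential inequality for the energy $E(t):=\|x(t)-x^*\|^2$ of the form $\dot E(t)\leq-\alpha E(t)$ and then integrate it by Gr\"onwall's lemma to obtain the claimed exponential decay. The whole argument is a splicing together of Proposition \ref{DSProperty} and the error bound of Lemma \ref{ErrorBound}.

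First I would observe that a $\gamma$-strongly pseudo-monotone operator is in particular pseudo-monotone, since the implication delivers $\langle F(y),y-x\rangle\geq\gamma\|x-y\|^2\geq 0$. Hence Proposition \ref{DSProperty} applies verbatim, and combined with the chain rule it gives, for every $t\in[0,+\infty)$,
\begin{equation*}
\frac{1}{2}\frac{d}{dt}\|x(t)-x^*\|^2=\left\langle x(t)-x^*,\dot{x}(t)\right\rangle\leq-(1-\lambda L)\|x(t)-y(t)\|^2.
\end{equation*}

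Next I would invoke the error bound of Lemma \ref{ErrorBound}. Rearranged, estimate \eqref{Errorbound} reads $\|x(t)-y(t)\|\geq\frac{\lambda\gamma}{1+\lambda L+\lambda\gamma}\|x(t)-x^*\|$, whence
\begin{equation*}
\|x(t)-y(t)\|^2\geq\left(\frac{\lambda\gamma}{1+\lambda L+\lambda\gamma}\right)^2\|x(t)-x^*\|^2.
\end{equation*}
Since $1-\lambda L>0$ under the standing hypothesis $0<\lambda<\frac1L$, substituting this lower bound into the preceding inequality yields
\begin{equation*}
\frac{d}{dt}\|x(t)-x^*\|^2\leq-2(1-\lambda L)\left(\frac{\lambda\gamma}{1+\lambda L+\lambda\gamma}\right)^2\|x(t)-x^*\|^2=-\alpha\|x(t)-x^*\|^2.
\end{equation*}

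Finally, Gr\"onwall's lemma finishes the proof; equivalently, one multiplies by $\exp(\alpha t)$ and notes that $\frac{d}{dt}\bigl[\exp(\alpha t)\|x(t)-x^*\|^2\bigr]\leq 0$, so that $t\mapsto\exp(\alpha t)\|x(t)-x^*\|^2$ is nonincreasing, which is exactly \eqref{Exponen}. I do not expect a genuine obstacle in this theorem: all the conceptual work is concentrated in the error bound of Lemma \ref{ErrorBound}, and once that quadratic growth estimate linking $\|x(t)-x^*\|$ to the residual $\|x(t)-y(t)\|$ is in hand, the statement collapses to a textbook linear differential inequality. The only point deserving care is to record that the strict positivity $1-\lambda L>0$, guaranteed by $\lambda<\frac1L$, is precisely what makes the rate $\alpha$ strictly positive and the decay genuine.
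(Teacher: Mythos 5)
Your proof is correct and follows essentially the same route as the paper: combine Proposition \ref{DSProperty} with the error bound of Lemma \ref{ErrorBound} to obtain the linear differential inequality $\frac{d}{dt}\|x(t)-x^*\|^2\leq-\alpha\|x(t)-x^*\|^2$ and conclude by Gr\"onwall's lemma. Your explicit remark that strong pseudo-monotonicity implies pseudo-monotonicity (so that Proposition \ref{DSProperty} is applicable) is a small point the paper leaves implicit, but otherwise the two arguments coincide.
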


\begin{proof}
From Lemma \ref{ErrorBound} we have that for every $t \in [0,+\infty)$
	\begin{equation*}
	\|x(t)-x^*\| \leq  \frac{ 1+\lambda L+\lambda \gamma}{\lambda \gamma}\|x(t)-y(t)\|,
	\end{equation*}
	which, in combination with Proposition \ref{DSProperty}, leads to
	\begin{eqnarray*}
		\frac{1}{2}\frac{d}{dt} \|x(t)-x^*\|^2
		&=&\left\langle x(t)-x^*, \dot{x}(t) \right\rangle \\
		&\leq & -\left( 1-\lambda L\right) \|x(t)-y(t)\|^2 \\
		&\leq & -(1-\lambda L)\left(\frac{\lambda \gamma}{1+\lambda L+\lambda \gamma}\right)^2 \|x(t)-x^*\|^2.
	\end{eqnarray*}
Relation \eqref{Exponen} is a direct consequence of  Gronwall's Lemma. 
\end{proof}

\begin{Example} \label{Exam3}\rm
 If $M : H \rightarrow H$ is such that
	\begin{equation*}
	\langle Mx, x\rangle\geq \gamma\|x\|^2 \quad \forall x\in H,
	\end{equation*}
	for some $\gamma > 0$,  then one can show that the operator $F : H \rightarrow H$ in Example \ref{Exam1} is $\alpha \gamma$-strongly pseudo-monotone on $H$. On the other hand,
	$F$ is in general not monotone, as one can see in Figure \ref{Fig2} for a particular operator.
	\begin{figure}[!htbp]
		\begin{centering}
			\includegraphics[width=1\textwidth]{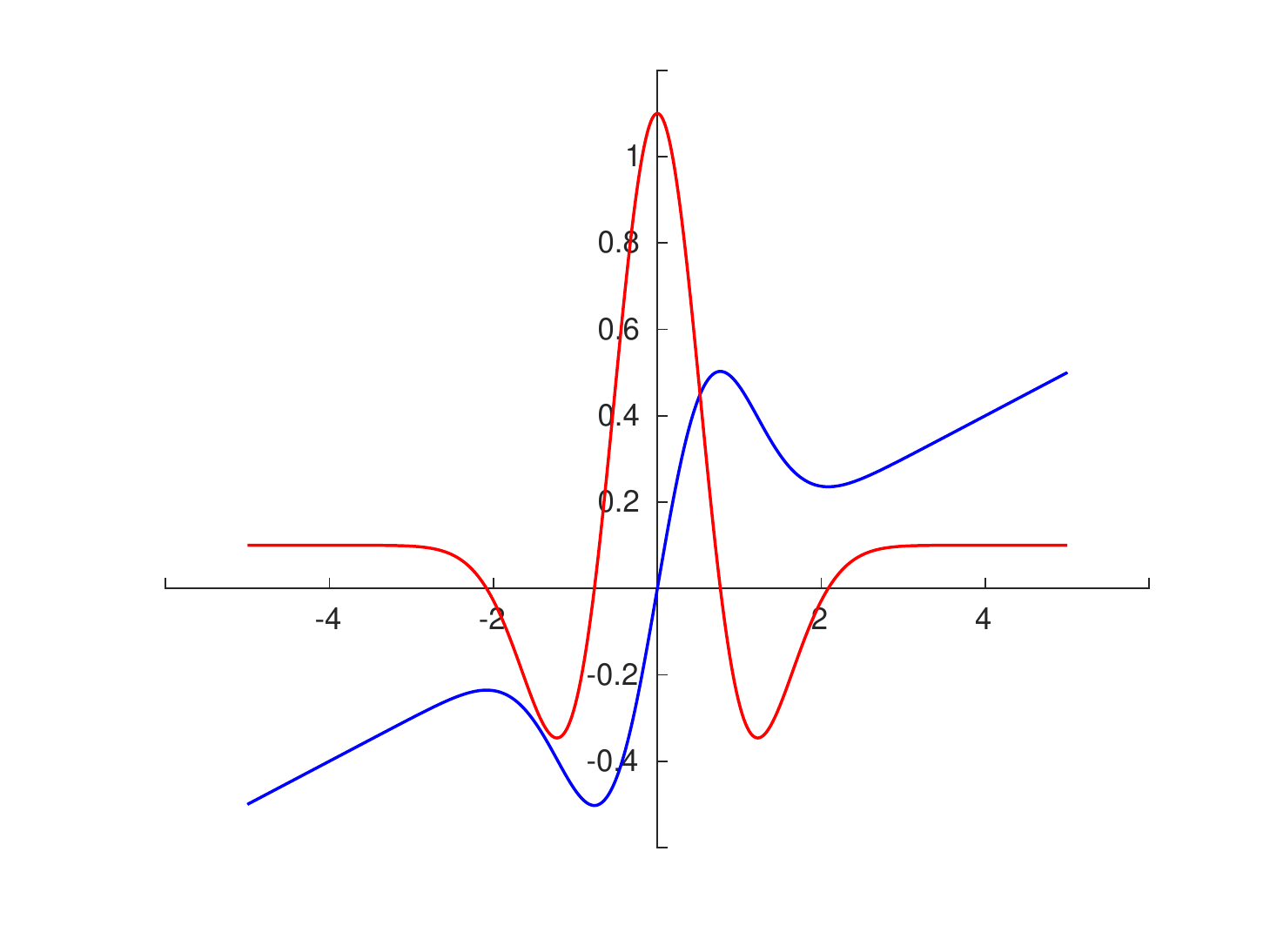}
			\protect\caption{The graph of $F : \mathbb{R} \rightarrow \mathbb{R}, F(x)=xe^{-x^2}+0.1x$, is in blue and the graph of $\nabla F : \mathbb{R} \rightarrow \mathbb{R}, \nabla F(x)=(1-2x^2)e^{-x^2}+0.1$, is in red.}\label{Fig2}
			\par\end{centering}
	\end{figure}
\end{Example}

\begin{Example} \label{Exam4}\rm
	Let $C= \{x \in [-5,5]^3 : x_1+x_2+x_3=0 \} \subseteq \mathbb{R}^3$ and $F: \mathbb{R}^3 \rightarrow \mathbb{R}^3$  be defined as
	\begin{equation*}
	F(x) =\left( e^{-\|x\|^2}+q\right) Mx,
	\end{equation*}
	where $q=0.2$ and
	$$
	M=\left[ {\begin{array}{ccc}
		1 & 0 & -1\\
		0 & 1.5&0 \\
		-1 &0 &2
		\end{array} } \right].
	$$
 As mentioned in Example \ref{Exam3}, $F$ is $\gamma$-strongly pseudo-monotone on $\mathbb{R}^3$ with 
	constant $\gamma:=q \cdot \lambda_{min} \approx 0.0764$, where $\lambda_{min}$ is the smallest eigenvalue of $M$, and Lipschitz continuous with constant $L \approx 5.0679$.
	Since for $x=(-1, 0, 0)^T, y=(-2, 0, 0)^T \in \mathbb{R}^3$
	$$
	\left\langle F(x)-F(y),x-y\right\rangle = -0.1312 <0,
	$$	
	$F$ is not monotone.	
		\begin{figure}[!htbp]
		\begin{centering}
			\includegraphics[width=1\textwidth]{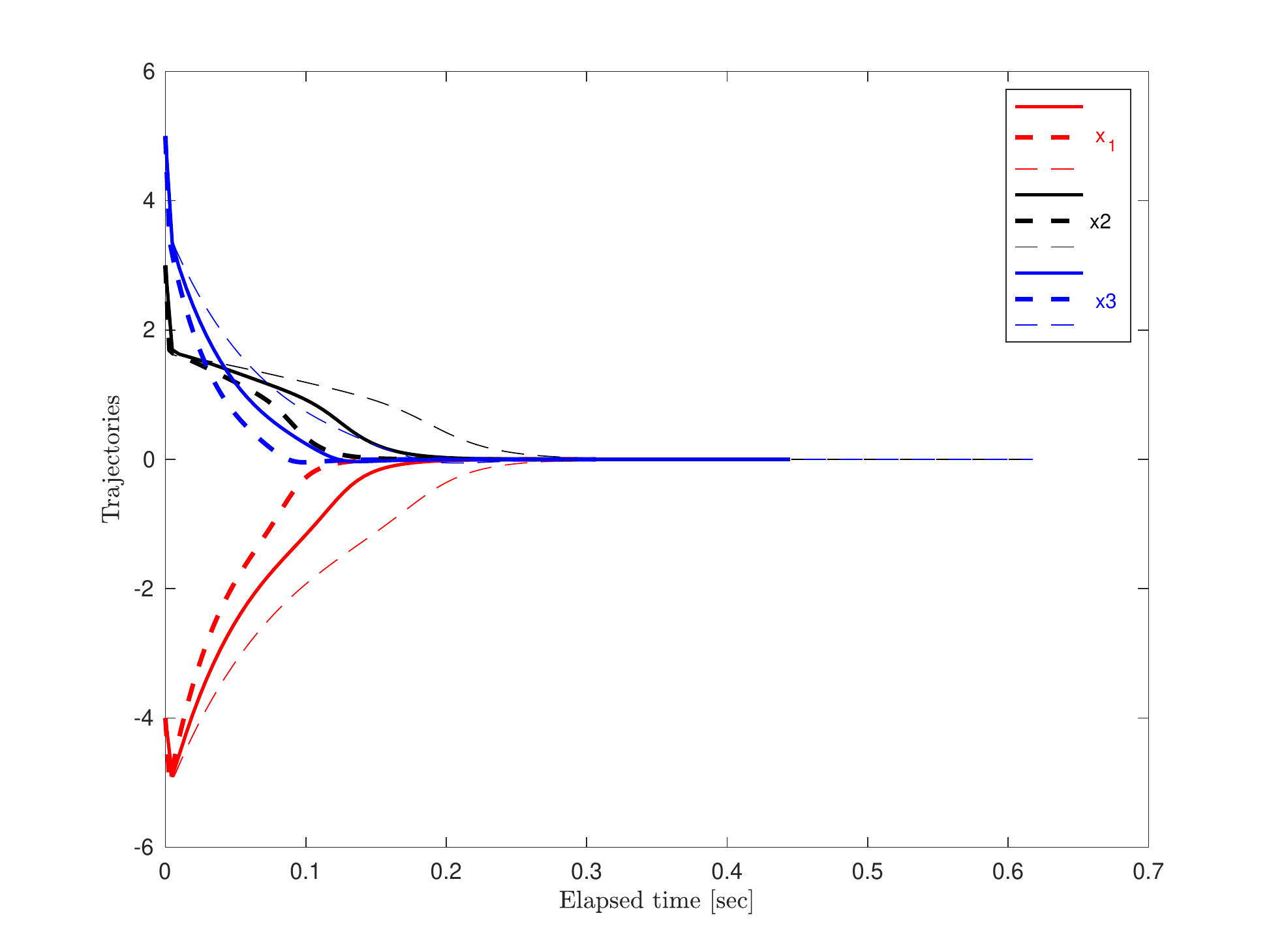}
			\protect\caption{Trajectories generated by the dynamical system \eqref{DS} for $x_0=(-4,3,5)^T$ and $\lambda= 0.99/L$ (continuous lines), $\lambda = 0.8/L$ (dashed thick lines), and $\lambda = 0.5/L$ (dashed thin lines).}
			\par\end{centering}\label{Fig3}
	\end{figure}

Figure \ref{Fig3} displays the trajectories generated by the dynamical system \eqref{DS} attached to VI($F,C$), with starting point $x_0=(-4,3,5)^T$ and different values of $\lambda$. These are represented for $\lambda= 0.99/L$ by continuous lines, for  $\lambda = 0.8/L$ by dashed thick lines, and for $\lambda = 0.5/L$ by dashed thin lines. They all converge exponentially to the unique solution $x^*=(0,0,0)^T$ of VI($F,C$). One can clearly see that the choice of $\lambda$ influences the speed of convergence, namely, the smaller the values of $\lambda$, the worse the convergence of the trajectories become.
\end{Example}

\section{The forward-backward-forward algorithm with relaxation parameters} 
\label{sec:TsengMethod}
In this section we analyze the convergence of Tseng's forward-backward-forward algorithm with relaxation parameters derived in Remark \ref{remark21} by the time discretization of the dynamical system \eqref{DS} in the context of solving pseudo-monotone variational inequalities.

\begin{algo}\label{alg:Tseng}

\textbf{Initialization:} Choose the starting point $x_0 \in H$, the step size $\lambda > 0$, and the sequence of relaxation parameters $(\rho_n)_{n\ge0}$. Set $n=0$.

\medskip
\textbf{Step 1:} Compute $$y_n=P_C(x_n-\lambda F(x_n)).$$

If $y_n=x_n$ or $F(y_n)=0$, then STOP: $y_n$ is a solution.

\medskip
\textbf{Step 2:} Set 
$$x_{n+1}= \rho_n \left( y_n + \lambda (F(x_n)-F(y_n))\right) +(1-\rho_n)x_n,$$  

update $n$ to $n+1$ and go to \textbf{Step 1}.
\end{algo}

\begin{Remark}
	If $\rho_n =1$ for every $n\ge 0$, then Algorithm \ref{alg:Tseng} reduces to the classical forward-backward-forward method proposed by Tseng in \cite{Tseng2000}.
\end{Remark}

For the convergence analysis we assume that Algorithm \ref{alg:Tseng} does not terminate after a finite number of iterations. In other words, we assume that for every $n \geq 0$ it holds $x_n \neq y_n$ and $F(y_n) \neq 0$.

\begin{Proposition}\label{SequenceProperty} 
Assume that the solution set $\Omega$ is nonempty and $F$ is pseudo-monotone on $C$ and Lipschitz continuous with constant $L$. Let $t_n:= y_n + \lambda (F(x_n)-F(y_n))$ for every $n \ge 0$. 
Then for every solution $x^* \in \Omega$ and every $n \geq 0$ it holds 
\begin{equation} \label{Main_Inequality}
	\|x_{n+1}-x^*\|^2\leq \|x_n-x^*\|^2- \rho_n \left(1-\lambda^2 L^2 \right) \|y_{n} - x_n\|^2-\rho_n(1-\rho_n) \|t_{n} - x_n\|^2.
\end{equation}
\end{Proposition}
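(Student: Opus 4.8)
The plan is to adapt the continuous-time computation from Proposition~\ref{DSProperty} to the discrete setting, exploiting that Algorithm~\ref{alg:Tseng} is precisely the explicit time discretization of \eqref{DS}. First I would establish the discrete analogue of the key inequality derived in Proposition~\ref{DSProperty}. Fix a solution $x^* \in \Omega$. Because $y_n \in C$ and $x^*$ solves VI($F,C$), we have $\langle F(x^*), y_n - x^* \rangle \geq 0$, and by pseudo-monotonicity of $F$ on $C$ this gives $\langle F(y_n), y_n - x^* \rangle \geq 0$. On the other hand, since $y_n = P_C(x_n - \lambda F(x_n))$, the projection inequality \eqref{projectionProperty} applied with the point $x^* \in C$ yields $\langle x_n - \lambda F(x_n) - y_n, x^* - y_n \rangle \leq 0$. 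Adding these and rearranging, exactly as in the proof of Proposition~\ref{DSProperty}, I expect to obtain
\begin{equation*}
\langle x_n - y_n - \lambda[F(x_n) - F(y_n)], y_n - x^* \rangle \geq 0,
\end{equation*}
which in terms of $t_n = y_n + \lambda(F(x_n) - F(y_n))$ reads $\langle x_n - t_n, y_n - x^* \rangle \geq 0$, i.e. $\langle t_n - x_n, x^* - y_n \rangle \geq 0$.

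Next I would expand the squared distance using the update rule $x_{n+1} = \rho_n t_n + (1 - \rho_n) x_n = x_n + \rho_n(t_n - x_n)$. Writing $x_{n+1} - x^* = (x_n - x^*) + \rho_n(t_n - x_n)$ and expanding gives
\begin{equation*}
\|x_{n+1} - x^*\|^2 = \|x_n - x^*\|^2 + 2\rho_n \langle t_n - x_n, x_n - x^* \rangle + \rho_n^2 \|t_n - x_n\|^2.
\end{equation*}
The plan is to rewrite the cross term using $x_n - x^* = (x_n - y_n) + (y_n - x^*)$, so that $\langle t_n - x_n, x_n - x^* \rangle = \langle t_n - x_n, x_n - y_n \rangle + \langle t_n - x_n, y_n - x^* \rangle$. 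The second inner product is $\leq 0$ by the inequality established above, so it can be dropped (after multiplying by the positive factor $2\rho_n$). For the first inner product I would substitute $t_n - x_n = (y_n - x_n) + \lambda(F(x_n) - F(y_n))$, obtaining $\langle t_n - x_n, x_n - y_n \rangle = -\|y_n - x_n\|^2 + \lambda \langle F(x_n) - F(y_n), x_n - y_n \rangle$.

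To convert the $\rho_n^2 \|t_n - x_n\|^2$ term into the stated form, I would write $\rho_n^2 = \rho_n - \rho_n(1 - \rho_n)$, which isolates the $-\rho_n(1-\rho_n)\|t_n - x_n\|^2$ summand appearing in \eqref{Main_Inequality} and leaves a $+\rho_n \|t_n - x_n\|^2$ contribution. Expanding this leftover term via $\|t_n - x_n\|^2 = \|y_n - x_n\|^2 + 2\lambda \langle y_n - x_n, F(x_n) - F(y_n)\rangle + \lambda^2 \|F(x_n) - F(y_n)\|^2$ and combining with the cross-term expansion above, the cross products involving $\langle y_n - x_n, F(x_n) - F(y_n)\rangle$ should cancel, leaving $\rho_n(-\|y_n - x_n\|^2 + \lambda^2\|F(x_n) - F(y_n)\|^2)$. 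Finally, Lipschitz continuity bounds $\|F(x_n) - F(y_n)\|^2 \leq L^2 \|x_n - y_n\|^2$, yielding the coefficient $-\rho_n(1 - \lambda^2 L^2)\|y_n - x_n\|^2$ and hence \eqref{Main_Inequality}.

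The main obstacle I anticipate is bookkeeping rather than conceptual: ensuring that the cross terms in $\langle F(x_n) - F(y_n), x_n - y_n \rangle$ cancel exactly once the factors of $\rho_n$, $\rho_n^2$, and $2\lambda$ are correctly tracked. Care is needed with the signs and with the decomposition $\rho_n^2 = \rho_n - \rho_n(1-\rho_n)$, since an error there would produce the wrong coefficient on either the $\|y_n - x_n\|^2$ or the $\|t_n - x_n\|^2$ term. I would verify the cancellation by collecting all contributions to the inner-product term before applying the Lipschitz bound.
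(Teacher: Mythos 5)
Your proposal is correct and follows essentially the same route as the paper: the same combination of pseudo-monotonicity at $y_n$ with the projection inequality to get $\langle t_n-x_n, x^*-y_n\rangle\geq 0$, the same expansion of $\|t_n-x_n\|^2$ with the Lipschitz bound, and the same cancellation of the $\langle F(x_n)-F(y_n), x_n-y_n\rangle$ cross terms. The only cosmetic difference is that the paper first bounds $\|t_n-x^*\|^2$ and then applies the convex-combination identity for $\|x_{n+1}-x^*\|^2$, whereas you expand $\|x_n-x^*+\rho_n(t_n-x_n)\|^2$ directly and split $\rho_n^2=\rho_n-\rho_n(1-\rho_n)$; the algebra is equivalent.
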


\begin{proof}
Let $x^*$ be an arbitrary element in $\Omega$ and $n \geq 0$ be fixed. Then we have
$$
\left\langle F(x^*), y-x^*\right\rangle \geq 0 \quad \forall y \in C. 
$$
Substituting $y:=y_n \in C$ into this inequality it yields
$$
\left\langle F(x^*), y_n-x^*\right\rangle \geq 0.
$$
From the pseudo-monotonicity of $F$ on $C$ it follows
\be \label{eq:pseudomototone}
\left\langle F(y_n), y_n-x^*\right\rangle \geq 0.
\ee
Since $y_n=P_C(x_n-\lambda F(x_n))$, according to \cref{projectionProperty}, we get
$$
\left\langle y-y_n,  y_n - x_n+\lambda F(x_n) \right\rangle  \geq 0 \quad \forall y \in C,
$$
which yields
\be \label{projectionOfyn}
\left\langle x^*-y_n,  y_n - x_n+\lambda F(x_n) \right\rangle  \geq 0.
\ee
Multiplying both sides of \cref{eq:pseudomototone} by $\lambda>0$ and adding the resulting inequality to \cref{projectionOfyn}, it yields
$$
\left\langle x^*-y_n,  y_n - x_n+\lambda F(x_n)-\lambda F(y_n) \right\rangle \geq 0
$$
or, equivalently,
$$
\left\langle x^*-y_n,  t_n - x_n \right\rangle \geq 0.
$$
This implies that
\begin{align}\label{Es1}
\left\langle  t_n- x^*,t_n - x_n \right\rangle 
& \leq  \left\langle  t_n- y_n,  t_n - x_n \right\rangle \nonumber \\
&=  \|t_n - x_n\|^2+ \left\langle  x_{n}- y_n,  t_n - x_n \right\rangle \nonumber\\
&=  \|t_n - x_n\|^2+ \left\langle  x_{n}- y_n,  y_n + \lambda (F(x_n)-F(y_n)) - x_n \right\rangle \nonumber \\
&=  \|t_n - x_n\|^2-\|y_{n} - x_n\|^2+  \lambda \left\langle  x_{n}- y_n,  F(x_n)-F(y_n)\right\rangle.
\end{align}
On the other hand, we have
\begin{equation} \label{Es2}
\|t_n - x^*\|^2-\|x_{n} - x^*\|^2+\|t_n - x_n\|^2=2\left\langle  t_n- x^*,  t_n - x_n \right\rangle. 
\end{equation}
Combining \eqref{Es1} and \eqref{Es2} we obtain
\begin{align}\label{Es3}
\|t_n - x^*\|^2 \leq & \ \|x_{n} - x^*\|^2+ \|t_n - x_n\|^2-2 \|y_{n} - x_n\|^2 \nonumber\\
& \ +2\lambda \left\langle  x_{n}- y_n,  F(x_n)-F(y_n)\right\rangle.
\end{align}
Using the Lipschitz continuity of $F$ we obtain
\begin{align}\label{eq:Es4}
\|t_n - x_n\|^2 &= \ \|y_n + \lambda (F(x_n)-F(y_n))-x_n\|^2 \nonumber \\
&= \ \|y_n - x_n\|^2+ 2\lambda \left\langle  y_{n}- x_n,  F(x_n)-F(y_n)\right\rangle+\lambda^2 \|F(x_n)-F(y_n)\|^2 \nonumber \\ 
&\leq \  \|y_n - x_n\|^2+ 2\lambda \left\langle  y_{n}- x_n,  F(x_n)-F(y_n)\right\rangle+ \lambda^2 L^2\|x_n-y_n\|^2.
\end{align}
Finally, from \eqref{Es3} and \eqref{eq:Es4} it yields
\begin{align*}
\|t_n - x^*\|^2 \leq  \|x_{n} - x^*\|^2-\left(1- \lambda^2 L^2\right)  \|y_{n} - x_n\|^2. 
\end{align*}
Moreover,
\begin{align*}
\|x_{n+1} - x^*\|^2 &=  \|\rho_n (t_n-x^*)+ (1-\rho_n) \left( x_{n} - x^*\right) \|^2\\
&=  \rho_n \|t_n-x^*\|^2+ (1-\rho_n) \| x_{n} - x^* \|^2 - \rho_n(1-\rho_n) \| t_{n} - x_n \|^2.
\end{align*}
By plugging this equality in the inequality above, we obtain the desired result.
\end{proof}

\begin{Remark}\rm
One can notice that the pseudo-monotonicity of $F$ was used in the proof of Proposition \ref{SequenceProperty} in order to obtain relation \eqref{eq:pseudomototone}. 
This means that the pseudo-monotonicity of $F$ can be actually replaced by the following weaker assumption (see \cite{DangLan, SolodovSvaiter99})
\begin{equation*}\label{relaxedpseudo}
\left\langle F(x), x-x^*\right\rangle \geq 0 \quad \forall x \in C \ \forall x^* \in \Omega,
\end{equation*}
which basically requires that every solution of the variational inequality of Stampacchia type is a solution of the variational inequality of Minty type.
\end{Remark}

\begin{Remark} \label{ReBounded}\rm
In contrast to the extragradient method, the sequence $(x_n)_{n \geq 0}$ generated by Algorithm \ref{alg:Tseng} may not be feasible. This is why we need to ask in the convergence analysis that $F$ is Lipschitz continuous on the whole space $H$. 
However, if the feasible set $C$ is bounded, then we can weaken this assumption by asking that $F$ is Lipschitz continuous on the bounded set 
	$$
	D:=\left\lbrace x +y: x \in C,\, \|y\| \leq d \right\rbrace,
	$$
where $d$ denotes the diameter of $C$. Notice that $C \subseteq D$. In this case, if we start Algorithm \ref{alg:Tseng} with an element $x_0 \in C$ and choose $0 < \lambda < \frac{1}{L}$, then from \eqref{Main_Inequality} and $\rho_0 \in [0,1]$ we have
	$$
	\|x_{1} - x^*\|^2 \leq  \|x_{0} - x^*\|^2-\rho_0 \left(1- \lambda^2 L^2\right)  \|y_{0} - x_0\|^2,
	$$
	which implies that $\|x_{1} - x^*\| \leq \|x_{0} - x^*\| \leq d $.  
	Since $x_1=x^*+x_1-x^*$, we have 	$x_1 \in D$. By induction, we obtain $\|x_{n} - x^*\| \leq d$ and therefore $x_n \in D$ for every $n \geq 0$. 
\end{Remark}

The following theorem states the convergence of the underrelaxed Tseng's method for pseudo-monotone variational inequalities.

\begin{Theorem}\label{Main result} 
Assume that the solution set $\Omega$ \!is nonempty, \!$F$ is pseudo-monotone on $H$, Lipschitz continuous with constant $L>0$ and sequentially weak-to-weak continuous, and  $0 < \lambda < \frac{1}{L}$. Assume also that $(\rho_n)_{n\ge0} \subseteq [0,1]$ and $\lim \inf_{n \to +\infty} \rho_n > 0$. Then the sequence  $(x_n)_{n \geq 0}$ generated by Algorithm \ref{alg:Tseng}  converges  weakly to a solution of {\rm VI($F, C$)}.
\end{Theorem}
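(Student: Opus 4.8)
The plan is to verify the two hypotheses of the sequential Opial Lemma for the iterates $(x_n)_{n\ge0}$ and then invoke it, mirroring the continuous argument from the proof of Theorem \ref{Main result DS}. Everything rests on the key estimate \eqref{Main_Inequality} supplied by Proposition \ref{SequenceProperty}. First I would observe that the choice $0<\lambda<1/L$ makes $1-\lambda^2L^2>0$, while $(\rho_n)_{n\ge0}\subseteq[0,1]$ forces both $\rho_n(1-\lambda^2L^2)$ and $\rho_n(1-\rho_n)$ to be nonnegative. Hence \eqref{Main_Inequality} immediately gives $\|x_{n+1}-x^*\|\le\|x_n-x^*\|$ for every $x^*\in\Omega$ and every $n\ge0$, so $(\|x_n-x^*\|)_{n\ge0}$ is nonincreasing and convergent, and $(x_n)_{n\ge0}$ is bounded. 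This is hypothesis (i) of Opial's Lemma.

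Next I would extract asymptotic regularity. Telescoping \eqref{Main_Inequality} and discarding the nonnegative $\rho_n(1-\rho_n)\|t_n-x_n\|^2$ terms yields
\[
(1-\lambda^2L^2)\sum_{n\ge0}\rho_n\|y_n-x_n\|^2\le\|x_0-x^*\|^2<+\infty.
\]
Since $\liminf_{n\to+\infty}\rho_n>0$ there are $\rho>0$ and $N_0\ge0$ with $\rho_n\ge\rho$ for $n\ge N_0$, so $\sum_{n\ge0}\|y_n-x_n\|^2<+\infty$ and therefore $\|y_n-x_n\|\to0$; the Lipschitz continuity of $F$ then gives $\|F(x_n)-F(y_n)\|\to0$ as well.

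The hard part will be hypothesis (ii): that every weak sequential cluster point $\hat{x}$ of $(x_n)_{n\ge0}$ lies in $\Omega$. The obstacle is that pseudo-monotonicity, unlike monotonicity, does not pass to weak limits directly, so a naive limiting argument fails. I would instead transplant the perturbation device from the proof of Theorem \ref{Main result DS}. Taking $x_{n_k}\rightharpoonup\hat{x}$, the relation $\|y_n-x_n\|\to0$ gives $y_{n_k}\rightharpoonup\hat{x}$, and $\hat{x}\in C$ since $C$ is weakly closed; if $F(\hat{x})=0$ then $\hat{x}\in\Omega$ trivially, so I assume $F(\hat{x})\neq0$. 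Fixing $y\in C$ and letting $k\to+\infty$ in the projection inequality $\langle x_{n_k}-\lambda F(x_{n_k})-y_{n_k},\,y-y_{n_k}\rangle\le0$, using $\|x_n-y_n\|\to0$, $\|F(x_n)-F(y_n)\|\to0$ and boundedness of $(y_n)_{n\ge0}$, produces $\liminf_{k\to+\infty}\langle F(y_{n_k}),y-y_{n_k}\rangle\ge0$. Along a sequence $\epsilon_k\downarrow0$ I would then select indices with $\langle F(y_{n_k}),y-y_{n_k}\rangle+\epsilon_k\ge0$ and $F(y_{n_k})\neq0$, set $z_k:=F(y_{n_k})/\|F(y_{n_k})\|^2$ so that $\langle F(y_{n_k}),y+\epsilon_kz_k-y_{n_k}\rangle\ge0$, apply pseudo-monotonicity to get $\langle F(y+\epsilon_kz_k),y+\epsilon_kz_k-y_{n_k}\rangle\ge0$, and pass to the limit. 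Here the sequential weak-to-weak continuity of $F$ forces $0<\|F(\hat{x})\|\le\liminf_k\|F(y_{n_k})\|$, hence $\|\epsilon_kz_k\|\to0$, and combined with $y_{n_k}\rightharpoonup\hat{x}$ this yields $\langle F(y),y-\hat{x}\rangle\ge0$. As $y\in C$ is arbitrary, Proposition \ref{Minty_Lemma} delivers $\hat{x}\in\Omega$.

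Having verified both hypotheses, I would conclude by the sequential Opial Lemma, the discrete counterpart of Lemma \ref{OpialContinuous}, that $(x_n)_{n\ge0}$ converges weakly to an element of $\Omega$, finishing the proof.
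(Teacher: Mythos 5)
Your proposal is correct and follows essentially the same route as the paper: Fej\'er monotonicity from \eqref{Main_Inequality}, asymptotic regularity $\|y_n-x_n\|\to 0$ via $\liminf_n\rho_n>0$, the $\epsilon_k z_k$ perturbation device combined with pseudo-monotonicity, weak-to-weak continuity and Proposition \ref{Minty_Lemma} to place every weak cluster point in $\Omega$ (the paper simply defers this step to the proof of Theorem \ref{Main result DS}), and the discrete Opial Lemma to conclude. The only cosmetic difference is that you obtain $\|y_n-x_n\|\to 0$ by telescoping and summability, whereas the paper reads it off directly from the vanishing of the right-hand side of \eqref{Main_Inequality}; both are equivalent.
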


\begin{proof}
Let $x^* \in \Omega$ be fixed. Since $(\rho_n)_{n\ge0} \subseteq [0,1]$ and $0 < \lambda < \frac{1}{L}$,  \cref{Main_Inequality} yields that the sequence $\left( \|x_{n} - x^*\|^2\right)_{n \geq 0}$ is monotonically decreasing and therefore convergent. 

To obtain the convergence of the sequence $(x_n)_{n \geq 0}$ to an element in $\Omega$ we only need to prove that every weak sequential cluster point of the sequence belongs to $\Omega$. The conclusion will follow from the Opial Lemma (see \cite[Theorem 5.5]{Bauschkebook}).

Relation \cref{Main_Inequality} also implies that
\begin{equation*} 
\lim_{n \to +\infty} \rho_n (1-\lambda^2 L^2)\|y_n-x_n\|=0,
\end{equation*}
which, since $\lim \inf_{n \to +\infty} \rho_n > 0$, further leads to
\begin{equation*}
\lim_{n \to +\infty} \|y_n-x_n\|=0.
\end{equation*}
Since $F$ is Lipschitz continuous on $H$, we have
$$
\|F(x_n)-F(y_n)\| \leq L \|x_n-y_n\| \quad \forall n \geq 0,
$$ 
hence,
$$
\lim_{n \to +\infty} \|F(x_n)-F(y_n)\|=0.
$$
Further, we consider $\hat{x}$, a weak sequential cluster point of $(x_n)_{n \geq 0}$, and a subsequence $(x_{n_k})_{k \geq 0}$ of $(x_n)_{n \geq 0}$ which converges weakly to $\hat{x}$ as $k \rightarrow +\infty$. Since $\lim_{k \to +\infty}\|x_{n_k}-y_{n_k}\|=0$, $(y_{n_k})_{k \geq 0}$ also converges weakly to $\hat{x}$ as $k \rightarrow +\infty$. 

We are now in the same situation as in the proof of Theorem \ref{Main result DS}, the role of the sequences $(x_n)_{n \geq 0}$ and $(y_n)_{n \geq 0}$ being played by  $(x_{n_k})_{k \geq 0}$ and $(y_{n_k})_{k \geq 0}$, respectively. Thus, arguing as in the proof of this theorem, we obtain that $\hat{x}\in \Omega$.
\end{proof}

\begin{Remark} \label{RemSteps}\rm
The conclusion of Theorem \ref{Main result} remains valid even if we replace in every iteration of Algorithm \ref{alg:Tseng} the fixed stepsize $\lambda >0$ by a variable stepsize $\lambda_n$, where the sequence $(\lambda_n)_{n \geq 0}$ fulfills
$$
0< \inf_{n \geq 0} \lambda_n \leq \sup_{n \geq 0} \lambda_n < \frac{1}{L}.
$$
On the other hand, when (an upper bound of) the Lipschitz constant of $F$ is not available, we can use in Algorithm \ref{alg:Tseng} the following adaptive stepsize strategy
\begin{equation*}\label{adaptiveStep}
	\lambda_{n+1}:= 
	\begin{cases}
	\begin{aligned}
	&\min \left\lbrace \frac{\mu \|x_{n}-y_n\|}{\|F(x_n)-F(y_n)\|}, \lambda_n \right\rbrace, \quad  \text{if} \ F(x_n)-F(y_n) \ne 0,\\
	 &\lambda_n, \quad \quad \quad  \text{otherwise},
	\end{aligned}
	\end{cases}
\end{equation*} 
where $\mu \in (0,1) $ and $\lambda_0 > 0$. The sequence $(\lambda_n)_{n \geq 0}$ is nonincreasing.  If, for $n \geq 0$, $F(x_n)-F(y_n) \ne 0$, then it holds
$$
\frac {\mu \|x_{n}-y_n\|}{\|F(x_n)-F(y_n)\|} \geq \frac {\mu \|x_{n}-y_n\|}{L \|x_n-y_n\|}=\frac{\mu}{L},
$$
which shows that  $(\lambda_n)_{n \geq 0}$  is bounded from below by $\min\left\lbrace \lambda_0, \frac{\mu}{L}\right\rbrace > 0$. Notice that, if $\lambda_0 \leq \frac{\mu}{L}$, then $(\lambda_n)_{n \geq 0}$ is a constant sequence,
which leads to a fixed stepsize strategy. Consequently, the limit $\lim_{n \to +\infty} \lambda_n$  exists and it is a positive real number. 

This means that we can adapt the proof of Proposition \ref{SequenceProperty} to the new adaptive stepsize strategy and, by taking into consideration \eqref{eq:Es4}, we get instead of
\eqref{Main_Inequality}
\begin{equation*}\label{Main_Inequality2}
\|x_{n+1}-x^*\|^2\leq \|x_n-x^*\|^2- \rho_n \left(1-\frac{\lambda_n^2 \mu^2}{\lambda_{n+1}^2}  \right) \|y_{n} - x_n\|^2 \ \forall n \geq 0.
\end{equation*}
Due to $\lim \inf_{n \to +\infty} \rho_n > 0$ and $\lim_{n \to +\infty } \left( 1-\frac{\lambda_n^2 \mu^2}{\lambda_{n+1}^2}\right)   = 1-\mu^2 > 0$, there exists $N>0$ such that 
$$
\|x_{n+1}-x^*\| \leq \|x_n-x^*\| \quad \forall n \geq N, 
$$
which implies that $\lim_{n \to +\infty }\|x_n-x^*\|$ exists and $\lim_{n \to +\infty }\|x_n-y_n\|=0$. From here, one can carry out the same convergence analysis as for the fixed stepsize strategy.
\end{Remark}

\begin{Remark}\rm
If the operator $F$ is monotone on $C$, then it is not necessary to impose that $F$ is sequentially weak-to-weak continuous. Indeed, for $y \in C$ fixed, we obtain for the subsequences $(x_{n_k})_{k \geq 0}$ and $(y_{n_k})_{k \geq 0}$ arising in the proof of Theorem \ref{Main result} (see also \eqref{ine-1} and the proof of Theorem \ref{Main result DS})
 \begin{eqnarray*}
\frac{1}{\lambda}\left\langle x_{n_k}-y_{n_k}, y-y_{n_k} \right\rangle 
 &\leq&   \left\langle  F(x_{n_k})-F(y_{n_k}),y-y_{n_k}  \right \rangle 
+ \left\langle  F(y_{n_k}),y-y_{n_k}  \right \rangle \\
 &\leq&   \left\langle  F(x_{n_k})-F(y_{n_k}),y-y_{n_k}  \right \rangle 
 + \left\langle  F(y),y-y_{n_k}  \right \rangle \quad \forall k \geq 0. 
 \end{eqnarray*}
Letting $k \to +\infty$ we get
$$
\left\langle  F(y),y-\hat{x}  \right \rangle \geq 0
$$
and this leads to the desired conclusion.
\end{Remark}

In the following we will show that the convergence result in Theorem \ref{Main result} follows in finite dimensional spaces under weaker assumptions.

\begin{Theorem}
Let $H$ be a  finite dimensional real Hilbert space. Assume that the solution set $\Omega$ is nonempty, $F$ is pseudo-monotone on $C$ and Lipschitz continuous with constant $L>0$, and  $0 < \lambda < \frac{1}{L}$.  
Assume also that $(\rho_n)_{n\ge0} \subseteq [0,1]$ and $\lim \inf_{n \to +\infty} \rho_n > 0$. Then the sequence  $(x_n)_{n \geq 0}$ generated by Algorithm \ref{alg:Tseng}  converges  to a solution of {\rm VI($F, C$)}.
\end{Theorem}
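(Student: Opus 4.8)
The plan is to exploit the two special features of a finite dimensional setting: weak convergence coincides with strong convergence, and bounded sequences are relatively compact. This is precisely what allows one to dispense both with the sequential weak-to-weak continuity of $F$ and with the delicate construction involving the auxiliary sequences $(\epsilon_k)$ and $(z_k)$ used in the proofs of Theorem \ref{Main result DS} and Theorem \ref{Main result}. In fact, the identification of a cluster point as a solution will use only the continuity of $F$, so that pseudo-monotonicity enters exclusively through Proposition \ref{SequenceProperty}, which in turn needs it only on $C$; this is what makes the hypotheses genuinely weaker.

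First I would fix $x^* \in \Omega$ and invoke Proposition \ref{SequenceProperty}. Since $(\rho_n)_{n \geq 0} \subseteq [0,1]$ and $0 < \lambda < \frac{1}{L}$, the inequality \eqref{Main_Inequality} shows that $\left(\|x_n - x^*\|^2\right)_{n \geq 0}$ is nonincreasing, hence convergent, so that $\lim_{n \to +\infty}\|x_n - x^*\|$ exists and $(x_n)_{n \geq 0}$ is bounded. Arguing exactly as in the proof of Theorem \ref{Main result}, telescoping \eqref{Main_Inequality} and using $\liminf_{n \to +\infty}\rho_n > 0$ together with $1 - \lambda^2 L^2 > 0$ yields $\lim_{n \to +\infty}\|y_n - x_n\| = 0$.

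Next, since $H$ is finite dimensional and $(x_n)_{n \geq 0}$ is bounded, it possesses a subsequence $(x_{n_k})_{k \geq 0}$ converging \emph{strongly} to some $\hat{x}$; because $\|y_{n_k} - x_{n_k}\| \to 0$, the subsequence $(y_{n_k})_{k \geq 0}$ converges strongly to $\hat{x}$ as well, and $\hat{x} \in C$ as $C$ is closed. The crucial simplification is that strong convergence, combined with the continuity of $F$ (which follows from its Lipschitz continuity), makes the passage to the limit immediate. Indeed, from $y_{n_k} = P_C(x_{n_k} - \lambda F(x_{n_k}))$ and the projection inequality \eqref{projectionProperty}, for every fixed $y \in C$ we obtain
\begin{equation*}
\frac{1}{\lambda}\left\langle x_{n_k} - y_{n_k}, y - y_{n_k}\right\rangle \leq \left\langle F(x_{n_k}), y - y_{n_k}\right\rangle \quad \forall k \geq 0.
\end{equation*}
Letting $k \to +\infty$ and using $x_{n_k} \to \hat{x}$, $y_{n_k} \to \hat{x}$, $F(x_{n_k}) \to F(\hat{x})$ together with $\|x_{n_k} - y_{n_k}\| \to 0$, the left-hand side tends to $0$ and we arrive at $\left\langle F(\hat{x}), y - \hat{x}\right\rangle \geq 0$. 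Since $y \in C$ was arbitrary, this is precisely the statement that $\hat{x} \in \Omega$; notably, one does not even need the Minty characterization of Proposition \ref{Minty_Lemma} here.

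Finally I would close with the standard subsequence principle: taking $x^* := \hat{x} \in \Omega$ in the first step, the limit $\lim_{n \to +\infty}\|x_n - \hat{x}\|$ exists, while along the subsequence $\|x_{n_k} - \hat{x}\| \to 0$; hence this limit equals $0$ and the entire sequence $(x_n)_{n \geq 0}$ converges to $\hat{x}$. I expect no serious obstacle in this argument, the only point requiring care being the verification that the compactness of $H$ genuinely replaces the weak-to-weak continuity assumption that was essential in the infinite dimensional setting.
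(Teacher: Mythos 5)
Your proof is correct and follows essentially the same route as the paper's: Fej\'er monotonicity from Proposition \ref{SequenceProperty}, $\|x_n-y_n\|\to 0$, extraction of a strongly convergent subsequence by finite-dimensional compactness, passage to the limit in the projection inequality using only the continuity of $F$, and the standard subsequence argument to conclude. The only cosmetic difference is that you divide the projection inequality by $\lambda$ before taking the limit, whereas the paper takes the limit in $\left\langle y-y_{n_k}, y_{n_k}-x_{n_k}+\lambda F(x_{n_k})\right\rangle \geq 0$ directly; both yield $\left\langle F(\hat{x}), y-\hat{x}\right\rangle \geq 0$.
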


\begin{proof}
Let $x^* \in \Omega$ be fixed. Since $0 < \lambda < \frac{1}{L}$,  from \cref{Main_Inequality} it follows that the sequence 
$\left( \|x_{n} - x^*\|^2\right)_{n \geq 0}$ is monotonically decreasing and therefore convergent. In addition we have
$$
\lim_{n \to +\infty} \|y_n-x_n\|=0.
$$
As $(x_n)_{n \geq 0}$ is bounded, there exists a subsequence $(x_{n_k})_{k \geq 0}$ of it, which converges to an element $\hat{x}$ as $k \rightarrow +\infty$. 
Since $\lim_{n \to +\infty}\|x_{n_k}-y_{n_k}\|=0$, $(y_{n_k})_{k \geq 0}$ also converges to $\hat{x}$ as $k \rightarrow +\infty$. 

Let now $y \in C$ be fixed. Then we have that
$$
\left\langle y-y_{n_k},  y_{n_k} - x_{n_k}+\lambda F(x_{n_k}) \right\rangle  \geq 0 \quad \forall k \geq 0. 
$$
Taking the limit as $k \to +\infty $ and using that $F$ is continuous, we obtain 
$$
\left\langle y-\hat{x}, F(\hat{x}) \right\rangle  \geq 0.
$$
Since $y \in C$ has been arbitrarily chosen, it follows that $\hat{x}$ is a solution of {\rm VI($F, C$)}. 

Replacing in \cref{Main_Inequality} $x^*$ with $\hat{x}$, it yields that the sequence $\left( \|x_{n} - \hat x\|\right)_{n \geq 0}$ is convergent. Since 
$ \lim_{k \to +\infty } \|x_{n_k} - \hat{x}\| =0 $, it follows that $\lim_{n \to +\infty } x_{n} = \hat{x}$.
\end{proof}

In the next theorem we show that one can consider even an overrelaxation of the forward-backward-forward algorithm without altering its convergence behaviour.

\begin{Theorem}\label{Main resultOverRelaxed} 
	Assume that the solution set $\Omega$ \!is nonempty, \!$F$ is pseudo-monotone on $H$, Lipschitz continuous with constant $L>0$ and sequentially weak-to-weak continuous, and $0 < \lambda < \frac{1}{L}$. Assume also that $(\rho_n)_{n\ge0} \subseteq [1,2)$ and $\limsup_{n \to +\infty} \rho_n < 2-\frac{2\lambda L}{1+\lambda L} $. Then the sequence  $(x_n)_{n \geq 0}$ generated by Algorithm \ref{alg:Tseng}  converges  weakly to a solution of {\rm VI($F, C$)}.
\end{Theorem}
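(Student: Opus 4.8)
The plan is to run the same two-stage argument used for the underrelaxed case in Theorem~\ref{Main result}. That is, once I have shown that $(\|x_n-x^*\|)_{n\ge0}$ converges for every $x^*\in\Omega$ and that $\lim_{n\to+\infty}\|y_n-x_n\|=0$, the proof that every weak sequential cluster point of $(x_n)_{n\ge0}$ lies in $\Omega$ is \emph{verbatim} the cluster-point construction already carried out in the proof of Theorem~\ref{Main result DS} (producing the auxiliary points $y+\epsilon_k z_k$, invoking pseudo-monotonicity and sequential weak-to-weak continuity, and then Proposition~\ref{Minty_Lemma}), and weak convergence follows from the Opial Lemma. Hence all the genuine work is confined to extracting these two facts from the basic estimate \eqref{Main_Inequality} in the regime $\rho_n\in[1,2)$.

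The main obstacle is the \emph{sign} of the last term in \eqref{Main_Inequality}. For $\rho_n\in[0,1]$ the quantity $-\rho_n(1-\rho_n)\|t_n-x_n\|^2$ is nonpositive and \eqref{Main_Inequality} is immediately of quasi-Fej\'er type; for $\rho_n\in[1,2)$ one has instead $-\rho_n(1-\rho_n)=\rho_n(\rho_n-1)\ge0$, so this term is \emph{added} and must be absorbed into the genuinely decreasing one. To do so I would bound $\|t_n-x_n\|$ against $\|y_n-x_n\|$ using $t_n-x_n=y_n-x_n+\lambda(F(x_n)-F(y_n))$ and the Lipschitz continuity of $F$: by the triangle inequality $\|t_n-x_n\|\le(1+\lambda L)\|y_n-x_n\|$, whence $\|t_n-x_n\|^2\le(1+\lambda L)^2\|y_n-x_n\|^2$. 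Substituting into \eqref{Main_Inequality} and factoring $1-\lambda^2L^2=(1-\lambda L)(1+\lambda L)$ gives
\begin{equation*}
\|x_{n+1}-x^*\|^2\le\|x_n-x^*\|^2-\rho_n(1+\lambda L)\bigl[(1-\lambda L)-(\rho_n-1)(1+\lambda L)\bigr]\|y_n-x_n\|^2.
\end{equation*}

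It then remains to verify that this coefficient is eventually bounded below by a positive constant, which is exactly where the upper bound on $\limsup_{n\to+\infty}\rho_n$ enters. The bracket is positive iff $\rho_n<1+\frac{1-\lambda L}{1+\lambda L}=\frac{2}{1+\lambda L}=2-\frac{2\lambda L}{1+\lambda L}$, precisely the standing hypothesis. Thus there exist $\bar\rho<\frac{2}{1+\lambda L}$ and an index $N_0$ with $\rho_n\le\bar\rho$ for all $n\ge N_0$; combined with $\rho_n\ge1$ this forces the whole coefficient to exceed some $c>0$ for all $n\ge N_0$. Consequently $(\|x_n-x^*\|^2)_{n\ge N_0}$ is nonincreasing, hence convergent (and the limit exists for every $x^*\in\Omega$), and summing the displayed inequality from $N_0$ yields $\sum_{n}\|y_n-x_n\|^2<+\infty$, so $\|y_n-x_n\|\to0$. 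With these two ingredients secured, the remainder of the argument is identical to that of Theorem~\ref{Main result}, and I expect no further difficulty there.
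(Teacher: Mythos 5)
Your proposal is correct and follows essentially the same route as the paper's proof: bound $\|t_n-x_n\|^2$ by $(1+\lambda L)^2\|y_n-x_n\|^2$ via Lipschitz continuity, absorb the positive overrelaxation term into the decreasing one, observe that the hypothesis $\limsup_{n\to+\infty}\rho_n<2-\frac{2\lambda L}{1+\lambda L}=\frac{2}{1+\lambda L}$ makes the resulting coefficient eventually bounded below by a positive constant, and then conclude via Fej\'er monotonicity, $\|y_n-x_n\|\to0$, and the cluster-point argument of Theorem~\ref{Main result DS} together with the Opial Lemma. The factoring $1-\lambda^2L^2=(1-\lambda L)(1+\lambda L)$ is only a cosmetic rewriting of the paper's coefficient $\rho_n\bigl(1-\lambda^2L^2-(\rho_n-1)(1+\lambda L)^2\bigr)$.
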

\begin{proof}
In view of Proposition \ref{SequenceProperty} we have that
\begin{equation} \label{Main_InequalityRelax}
\|x_{n+1}-x^*\|^2\leq \|x_n-x^*\|^2- \rho_n \left(1-\lambda^2 L^2 \right) \|y_{n} - x_n\|^2 +\rho_n(\rho_n-1) \|t_{n} - x_n\|^2 \ \forall n \geq 0.
\end{equation}
By the Lipschitz continuity of $F$ we have for all $n \ge 0$ that 
$$
\|t_{n} - x_n\|^2 = \|y_{n} - x_n + \lambda \left( F(x_n) - F(y_n)\right) \|^2 
\le \left(1+\lambda L \right)^2 \|y_{n} - x_n\|^2.
$$
Therefore, from \eqref{Main_InequalityRelax} we obtain
\begin{eqnarray*} \label{Main_InequalityRelax2}
\|x_{n+1}-x^*\|^2
&\leq& \|x_n-x^*\|^2- \rho_n \left(1-\lambda^2 L^2 \right) \|y_{n} - x_n\|^2\\ \nonumber
& & +\rho_n(\rho_n-1) \left(1+\lambda L \right)^2\|y_{n} - x_n\|^2\\\nonumber
&=&\|x_n-x^*\|^2- \rho_n \left(1-\lambda^2 L^2 -(\rho_n-1) \left(1+\lambda L \right)^2\right) \|y_{n} - x_n\|^2.
\end{eqnarray*}
Since $(\rho_n)_{n\ge0} \subseteq [1,2)$ and $\limsup_{n \to +\infty} \rho_n < 2-\frac{2\lambda L}{1+\lambda L} $, it is easy to check that 
$$
\liminf_{n \to +\infty} \left(1-\lambda^2 L^2 -(\rho_n-1) \left(1+\lambda L \right)^2\right) > 0.
$$
Hence, there exists $N \geq 0$ such that the sequence 
$\left( \|x_{n} - x^*\|^2\right)_{n \geq N}$ is monotonically decreasing and therefore convergent. In addition,  we have
$$
\lim_{n \to +\infty} \|y_n-x_n\|=0.
$$
The rest of the proof can be done in analogy to the  proof of of Theorem \ref{Main result}, relying on the Opial Lemma and on arguments from the proof of Theorem \ref{Main result DS}.
\end{proof}

\begin{Example} \label{Exam1bis} \rm A differentiable function $f: E \rightarrow \mathbb{R}$, where $E \subseteq \mathbb{R}^n$ is an open set, is called \emph{pseudo-convex on $E$}, if for every $x,y \in E$ it holds
$$\langle \nabla f(x), y-x \rangle \geq 0 \quad \Rightarrow \quad f(y) \geq f(x).$$
It is well-known that $f$ is pseudo-convex on $E$ if and only if $\nabla f$ is pseudo-monotone on $E$ (\cite{KaramardianSchaible90}). Algorithm \ref{alg:Tseng} can be used to solve optimization problems of the form
	\begin{equation*} \label{optiProb}
	\min_{x \in C}\, f(x),
	\end{equation*}
where $f: \mathbb{R}^n \to \mathbb{R}$ is a differentiable function with Lipschitz continuous gradient which is also pseudo-convex on an open set $E \subseteq  \mathbb{R}^n$, and $C \subseteq E$ is a nonempty, convex and closed set. The class of pseudo-convex functions has been investigated in \cite{Mangasarian}, while characterizations of quadratic pseudo-convex functions have been provided in \cite{CottleFerland}.

A important subclass of the one of pseudo-convex functions are ratios of convex and concave functions. Indeed, if $E \subseteq \mathbb{R}^n$ is a convex set, $g:E \to [0,+\infty)$ is a convex function, $h: E \to (0,+\infty)$ is a concave function, and both $g$ and $h$ are differentiable on $E$, then the function
$$f : E \rightarrow [0,+\infty), \quad f(x) := \frac{g(x)}{h(x)},$$
is pseudo-convex on $E$ (\cite{BL06}).
\end{Example}

In the following we show that when $F$ is strongly pseudo-monotone on $C$, then Algorithm \ref{alg:Tseng} generates a sequence which converges linearly to the unique solution of  {\rm VI($F, C$)}. 
We extend in this way a result proved by Tseng in \cite{Tseng2000} for strongly monotone operators.

\begin{Theorem}\label{Main result2} 
Assume that $F$ is $\gamma$-strongly pseudo-monotone on $C$ with $\gamma >0$ and Lipschitz continuous with constant $L>0$, and $0< \lambda < \frac{1}{L}$. Assume also that $(\rho_n)_{n\ge0} \subseteq [0,1]$.  Let $x^*$ be the unique  solution of the problem {\rm VI($F, C$)}.  Then
$$ 
\| x_{n+1}-x^*\| \leq \delta_n \| x_{n}-x^*\| \quad \forall n \geq 0,
$$
where $\delta_n:=\left( 1-\rho_n \left( 1-\lambda^2 L^2\right)  \left( \frac{\lambda \gamma }{1+\lambda L+\lambda \gamma} \right)^2 \right)^{1/2} \in (0,1)$.\\
In addition, if 
$\lim \inf_{n \to +\infty} \rho_n > 0$, 
then the sequence  $(x_n)_{n \geq 0}$  converges linearly to $x^*$.
\end{Theorem}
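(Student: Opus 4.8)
The plan is to mirror at the discrete level the argument used for the dynamical system in Theorem \ref{Main result 2DS}. The key quantitative ingredient will be a discrete version of the error bound from Lemma \ref{ErrorBound}, which controls the distance to the (unique) solution $x^*$ by the residual $\|x_n - y_n\|$. First I would establish this bound: since $y_n = P_C(x_n - \lambda F(x_n))$ with $y_n \in C$ and $x^* \in \Omega$, strong pseudo-monotonicity gives $\langle F(y_n), y_n - x^*\rangle \geq \gamma \|y_n - x^*\|^2$, exactly as in the continuous case. Combining this with the projection inequality \eqref{projectionOfyn} and the Lipschitz continuity of $F$, the same chain of estimates as in the proof of Lemma \ref{ErrorBound} — with $x(t), y(t)$ replaced by $x_n, y_n$ — yields
\begin{equation*}
\|x_n - x^*\| \leq \frac{1+\lambda L + \lambda\gamma}{\lambda\gamma}\|x_n - y_n\|, \qquad \text{equivalently}\qquad \|y_n - x_n\|^2 \geq \left(\frac{\lambda\gamma}{1+\lambda L+\lambda\gamma}\right)^2 \|x_n - x^*\|^2.
\end{equation*}

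Next I would feed this lower bound on the residual into the main descent inequality \eqref{Main_Inequality} from Proposition \ref{SequenceProperty}. Since $(\rho_n)_{n\ge0}\subseteq[0,1]$, the last term $-\rho_n(1-\rho_n)\|t_n-x_n\|^2$ is nonpositive and may simply be dropped, leaving
\begin{equation*}
\|x_{n+1}-x^*\|^2 \leq \|x_n - x^*\|^2 - \rho_n(1-\lambda^2 L^2)\|y_n - x_n\|^2.
\end{equation*}
Substituting the residual lower bound gives
\begin{equation*}
\|x_{n+1}-x^*\|^2 \leq \left(1 - \rho_n(1-\lambda^2 L^2)\left(\frac{\lambda\gamma}{1+\lambda L+\lambda\gamma}\right)^2\right)\|x_n - x^*\|^2 = \delta_n^2\,\|x_n - x^*\|^2,
\end{equation*}
which is the claimed one-step contraction after taking square roots. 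To complete the first assertion I must verify $\delta_n \in (0,1)$: the factor is $<1$ because $0<\lambda<\tfrac1L$ forces $1-\lambda^2L^2>0$, and is $\geq 0$ (indeed can be taken $>0$ so the bracket is a genuine $(0,1)$ contraction factor) since $\rho_n \le 1$, $1-\lambda^2 L^2 \le 1$, and the eigenvalue-type ratio $\lambda\gamma/(1+\lambda L+\lambda\gamma)$ lies in $(0,1)$, so their product is strictly below $1$.

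For the final statement, linear convergence, I would use the hypothesis $\liminf_{n\to+\infty}\rho_n>0$. This gives a uniform lower bound $\rho_n \geq \underline{\rho}>0$ for all large $n$, hence a uniform contraction constant $\delta := \big(1-\underline{\rho}(1-\lambda^2L^2)(\lambda\gamma/(1+\lambda L+\lambda\gamma))^2\big)^{1/2} \in (0,1)$ with $\delta_n \le \delta$ eventually. Telescoping $\|x_{n+1}-x^*\| \le \delta_n \|x_n - x^*\|$ then yields $\|x_n - x^*\| \le C\,\delta^{\,n}$ for some constant $C$, which is precisely linear (R-linear) convergence to $x^*$. I do not expect any serious obstacle here; the only point requiring care is the bookkeeping needed to pass from the per-iteration factors $\delta_n$ to a single rate $\delta$, since $\liminf \rho_n>0$ controls the $\rho_n$ only for large $n$, so one either works with a tail index $N$ beyond which $\rho_n \ge \underline\rho$ and absorbs the finitely many earlier factors into the constant $C$, or invokes that $\prod_{n} \delta_n \to 0$ geometrically. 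The genuinely substantive part is the discrete error bound, but that is a routine transcription of Lemma \ref{ErrorBound}.
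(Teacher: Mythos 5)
Your proposal is correct and follows essentially the same route as the paper: the paper likewise transcribes Lemma \ref{ErrorBound} to the discrete setting to get $\|x_n-x^*\| \leq \frac{1+\lambda L+\lambda\gamma}{\lambda\gamma}\|x_n-y_n\|$, plugs this into \eqref{Main_Inequality} (dropping the nonpositive $-\rho_n(1-\rho_n)\|t_n-x_n\|^2$ term), and concludes linear convergence from $\limsup_{n\to+\infty}\delta_n<1$. The only cosmetic difference is that you spell out the tail-index bookkeeping for the telescoping, which the paper leaves implicit.
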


\begin{proof}
Let $n \geq 0$ be fixed. As in the proof of Lemma \ref{ErrorBound} (see \eqref{Errorbound}), one can show that
\begin{equation}\label{Errorbound2}
\|x_n-x^*\| \leq \|x_n-y_n\|+\|y_n-x^*\| \leq  \frac{ 1+\lambda L+\lambda \gamma}{\lambda \gamma}\|x_n-y_n\|.
\end{equation}
From \cref{Errorbound2} and \cref{Main_Inequality} we obtain
$$
\|x_{n+1}-x^*\|^2 
\leq \left( 1- \rho_n \left( 1-\lambda^2 L^2\right)  \left( \frac{\lambda \gamma }{1+\lambda L+\lambda \gamma} \right)^2 \right)  \|x_n-x^*\|^2,
$$
therefore,
$$
\|x_{n+1}-x^*\| \leq \delta_n  \|x_n-x^*\|,
$$
where $\delta_n:=\left( 1-\rho_n \left( 1-\lambda^2 L^2\right)  \left( \frac{\lambda \gamma }{1+\lambda L+\lambda \gamma} \right)^2 \right)^{1/2} \in (0,1)$. 
Now,  if $\lim \inf_{n \to +\infty} \rho_n > 0$, then we have $\lim \sup_{n \to +\infty} \delta_n < 1$, which means that 
$(x_n)_{n \geq 0}$ converges linearly to $x^*$.
\end{proof}

One can prove in a similar way linear convergence for the sequence generated by the overrelaxed variant of the forward-backward-forward algorithm.

\begin{Theorem}\label{Main result2Relax} 
	Assume that $0 < \lambda < \frac{1}{L}$ and $F$ is $\gamma$-strongly pseudo-monotone on $C$ with $\gamma >0$ and Lipschitz continuous with constant $L>0$. Assume also that $(\rho_n)_{n\ge0} \subseteq [1,2)$ and $\lim \sup_{n \to +\infty} \rho_n < 2-\frac{2\lambda L}{1+\lambda L}$.  Then the sequence  $(x_n)_{n \geq 0}$  converges linearly to the unique  solution $x^*$ of {\rm VI($F, C$)}.
\end{Theorem}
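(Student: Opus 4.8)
The plan is to combine the strong-pseudo-monotone error bound with the overrelaxed descent inequality already derived in the proof of Theorem \ref{Main resultOverRelaxed}. First I would observe that, exactly as in \eqref{Errorbound2}, the strong pseudo-monotonicity of $F$ together with \eqref{projectionOfyn} and the Lipschitz continuity yields the error bound
\begin{equation*}
\|x_n-x^*\| \leq \frac{1+\lambda L+\lambda \gamma}{\lambda \gamma}\|x_n-y_n\| \quad \forall n \geq 0,
\end{equation*}
so that $\|y_n-x_n\|^2 \geq \left(\frac{\lambda \gamma}{1+\lambda L+\lambda \gamma}\right)^2 \|x_n-x^*\|^2$. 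The key point is that this error bound holds independently of the relaxation regime, since it is a purely pointwise estimate at each iterate and does not use $\rho_n \in [0,1]$.

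Next I would recall from the proof of Theorem \ref{Main resultOverRelaxed} the consolidated inequality obtained after bounding $\|t_n-x_n\|^2 \le (1+\lambda L)^2\|y_n-x_n\|^2$, namely
\begin{equation*}
\|x_{n+1}-x^*\|^2 \leq \|x_n-x^*\|^2- \rho_n \left(1-\lambda^2 L^2 -(\rho_n-1)(1+\lambda L)^2\right)\|y_n-x_n\|^2 \quad \forall n\ge 0.
\end{equation*}
Writing $c_n := \rho_n\left(1-\lambda^2 L^2 -(\rho_n-1)(1+\lambda L)^2\right)$, the assumptions $(\rho_n)_{n\ge0}\subseteq [1,2)$ and $\limsup_{n\to+\infty}\rho_n < 2-\frac{2\lambda L}{1+\lambda L}$ guarantee, as already checked there, that $\liminf_{n\to+\infty} c_n =: c > 0$. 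Substituting the error bound into this descent inequality gives
\begin{equation*}
\|x_{n+1}-x^*\|^2 \leq \left(1- c_n\left(\frac{\lambda \gamma}{1+\lambda L+\lambda \gamma}\right)^2\right)\|x_n-x^*\|^2 \quad \forall n\ge 0.
\end{equation*}

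Finally I would set $\delta_n := \left(1- c_n\left(\frac{\lambda \gamma}{1+\lambda L+\lambda \gamma}\right)^2\right)^{1/2}$ and argue that, since $\liminf_{n\to+\infty} c_n = c>0$, there is some $N\ge 0$ and $\bar\delta<1$ with $\delta_n \le \bar\delta$ for all $n\ge N$; iterating $\|x_{n+1}-x^*\|\le \delta_n\|x_n-x^*\|$ from $N$ on then yields $\|x_n-x^*\| \le \bar\delta^{\,n-N}\|x_N-x^*\|$, i.e. linear convergence to $x^*$. The one point requiring a little care is to verify that $\delta_n$ is well defined and lies in $[0,1)$ for $n$ large, i.e. that the quantity inside the square root is nonnegative and strictly below $1$; this follows because $c_n$ stays in a bounded positive range and the factor $\left(\frac{\lambda \gamma}{1+\lambda L+\lambda \gamma}\right)^2$ is in $(0,1)$, so the main obstacle is really just the bookkeeping of constants rather than any new idea, the substantive work having been done in Theorems \ref{Main resultOverRelaxed} and \ref{Main result2}.
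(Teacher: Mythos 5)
Your proof is correct and is exactly the argument the paper has in mind: the paper omits the proof with the remark that it goes ``in a similar way'', namely by inserting the error bound \eqref{Errorbound2} (which, as you rightly note, is independent of the relaxation regime) into the descent inequality derived in the proof of Theorem \ref{Main resultOverRelaxed}. Two small points of bookkeeping: the substituted inequality should be asserted only for $n$ large enough that $c_n\ge 0$, since the hypotheses permit finitely many $\rho_n$ above $2-\frac{2\lambda L}{1+\lambda L}$, for which the substitution reverses direction --- your restriction to $n\ge N$ in the final step already absorbs this --- and the well-definedness of $\delta_n$ should be justified by the sharper estimate $c_n\le \rho_n\bigl(1-\lambda L-(\rho_n-1)(1+\lambda L)\bigr)(1+\lambda L)\le (1-\lambda L)(1+\lambda L)=1-\lambda^2L^2<1$, because the bounds $c_n<2$ and $\left(\frac{\lambda\gamma}{1+\lambda L+\lambda\gamma}\right)^2<1$ alone do not force the product to stay below $1$.
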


\section{Numerical experiments} 
\label{sec:Numerical}

In this section we present two numerical experiments which we carried out in order to compare Algorithm \ref{alg:Tseng} with other algorithms in the literature designed for solving
pseudo-monotone variational inequalities. We implemented the numerical codes in Matlab and performed all computations on a Linux desktop with an Intel(R) Core(TM) i5-4670S processor at
3.10GHz. In our experiments we considered only variational inequalities governed by pseudo-monotone operators which are not monotone.

Let be VI($F,C$) with
$$
C = \left \{x \in \mathbb{R}^5: \sum_{i=1}^{m} x_i  \leq 5, 0 \leq x_i \leq 5 \right\}
$$
and
$$F : \mathbb{R}^5 \rightarrow \mathbb{R}^5,  F(x) = \left( e^{-\|x\|^2}+\alpha \right) \left( Mx+p\right),$$
where $\|\cdot\|$ denotes the Euclidean norm on $\mathbb{R}^5$, $\alpha=0.1$, $p=(-1, 2, 1, 0,-1)^T$ and 
\begin{equation}\label{M}
M:=
\left[ {\begin{array}{ccccc}
	5 & -1 &2&0&2\\
	-1 & 6&-1&3&0 \\
	2 &-1 &3&0&1\\
	0 & 3& 0 & 5&0\\
	2 & 0&1&0&4
	\end{array} } \right]
\end{equation}
is a positive definite matrix. We computed the unique solution $x^*$ of the variational inequality VI($F,C$) by running $10000$ iterations of Algorithm \ref{alg:Tseng} with $\rho_n=1$ for all $n \geq 0$ and stepsize $\lambda=\frac{0.5}{L}$. 

In a first experiment, we considered different variants of Algorithm \ref{alg:Tseng} with constant relaxation parameters $\rho_n = \rho$ for all $n \geq 0$, chosen such that $\rho  < 2 - \frac{2 \lambda L}{1+\lambda L}=\frac{4}{3}$.  The aim was to see to which extend the relaxation parameter does influence the convergence behaviour of the method. We considered $x_0=(1,3,2,1,4)^T$ as starting point and $\|x_n-x^*\| \leq 10^{-6}$ as stopping criterion. The projection on the set  $C$ was computed by using the {\tt quadprog} function in Matlab. 

In Table \ref{rFBF} we present the performances of the algorithm for different values of the relaxation parameter. It can be seen that the larger the values of the relaxation parameter $\rho$, the better the algorithm performs. This shows how important is it to investigate overrelaxed algorithms from both theoretical and numerical perspective.

\begin{table}[ht!]\centering
	\protect\caption{Comparison of the performances of Algorithm \ref{alg:Tseng} for different values of $\rho$.}\label{rFBF}
		\begin{tabular}{|c|c|c|c|c|c|c|c|c|c|}
		\hline 
		$\rho$ &$0.5$& $0.6$& $0.7$& $0.8$& $0.9$& $1$&$1.1$& $1.2$& $1.3$\\
		\hline
		Iterations & 236 & 195&166& 144& 127&112&90&93&88\\		 
		\hline
		Time (sec) & 2.06   & 1.10 &0.89 &0.74&0.60&0.55&0.47&0.56&0.44\\	 
		\hline				
	\end{tabular}
\end{table}

\begin{figure}[!ht]
\begin{centering}
\includegraphics[width=1\textwidth]{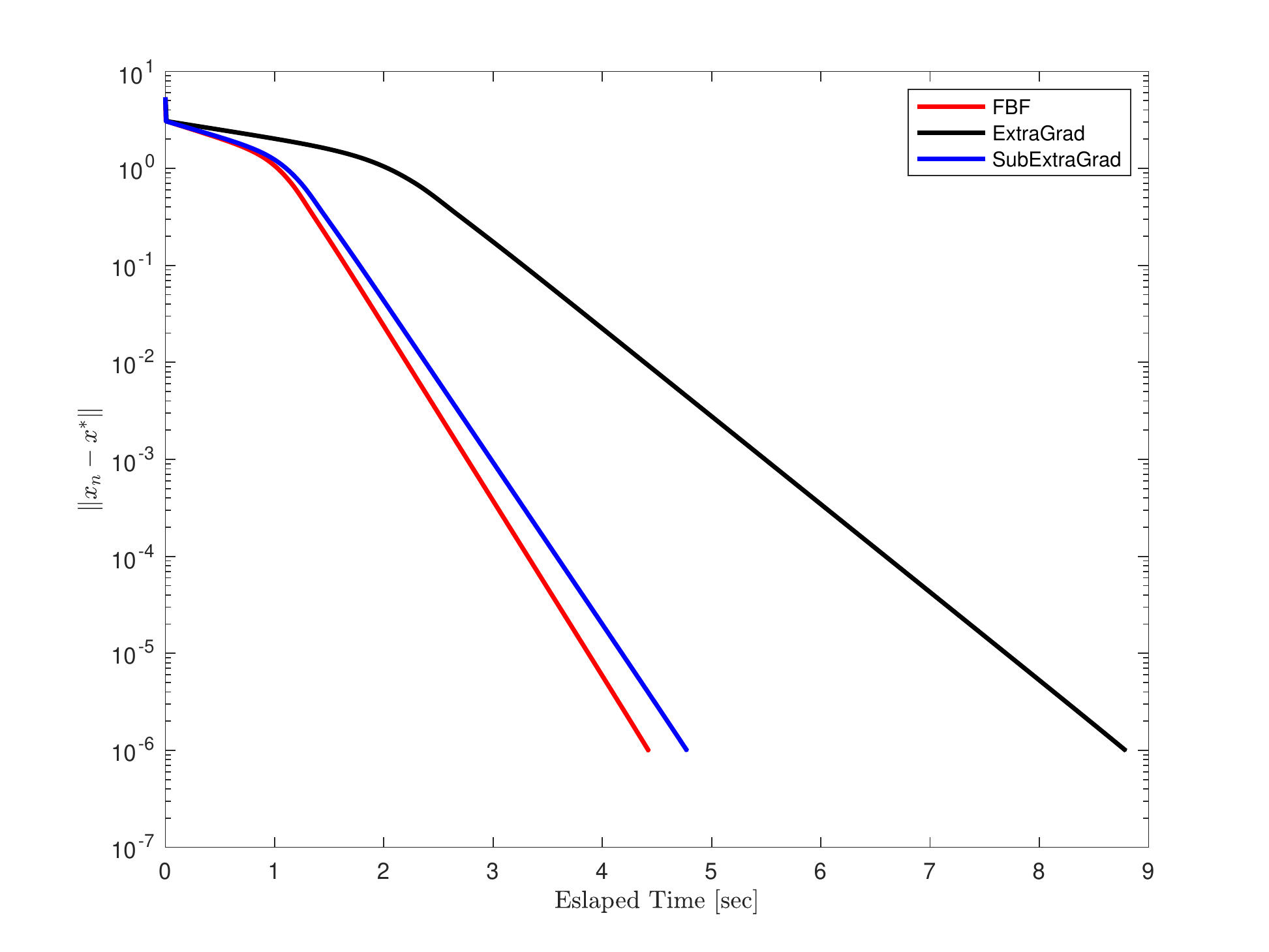}
\protect\caption{Comparison of the convergence behaviour of the forward-backward-forward method without relaxation (FBF), the extragradient method (ExtraGrad), and the  subgradient-extragradient method (SubExtraGrad) with stepsize $\lambda=0.99/L$.}\label{fig4}
\par\end{centering}
\end{figure}

In a second experiment we compared for the same problem the performances of the forward-backward-forward method without relaxation, the extragradient method and the subgradient-extragradient method, by considering for all three methods as stepsize $\lambda=0.99/L$. It can be seen in Figure \ref{fig4} that the forward-backward-forward method outperforms the extragradient method, being at least two times faster. This is not surprising, since the extragradient method requires two projections on the set $C$ at each iteration, while the forward-backward-forward method requires only one. It can be also notice that the latter also slightly outperforms the subgradient-extragradient method.
 \begin{figure}[!ht]
	\begin{centering}
		\includegraphics[width=1\textwidth]{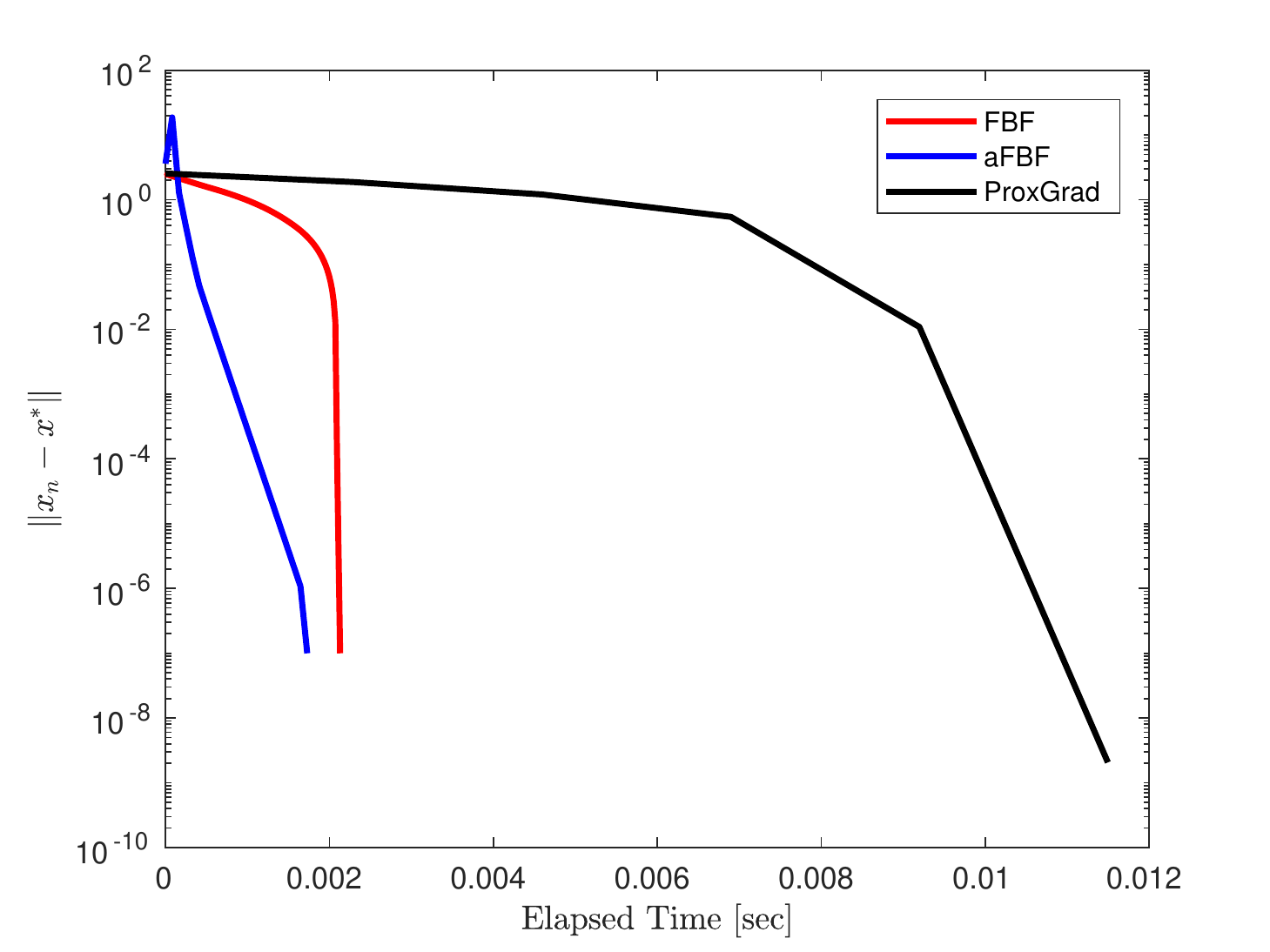}
		\protect\caption{Comparison of the convergence behaviour of the forward-backward-forward method (FBF) with fixed stepsize, the one with adaptive stepsize (aFBF)  and the proximal-gradient method (ProxGrad) for the fractional programming problem \eqref{quadpro}.}\label{FracProg}
\end{centering}
\end{figure}

In a third experiment we considered the quadratic fractional programming problem
 \begin{equation} \label{quadpro}
 \min_{x \in C} f(x):=\frac{x^TMx+a^Tx+c}{b^Tx+d},
 \end{equation}
with
 $$
C = \{x \in \mathbb{R}^5: 1 \leq x_i \leq 3, \ i=1,2, 3,4, 5\},
 $$
$M$ taken as in \eqref{M}, $a=(1,2,-1,-2,1)^T$, $b=(1,0,-1,0,1)^T$, $c=-2$ and $d=20$. According to the discussion in Example \ref{Exam1bis}, $f$ is pseudo-convex on the open set
$E:=\{x \in \mathbb{R}^5: b^Tx+d = x_1-x_3 +x_5 + 20 >0\}$, which implies that
$$
F: \mathbb{R}^5 \rightarrow \mathbb{R}^5,  \ F(x)=\nabla f(x):=\frac{\left( b^Tx+d\right) \left( 2Mx+a\right)-b\left( x^TMx+a^Tx+c\right)  }{\left( b^Tx+d\right)^2 },
$$
is pseudo-monotone on $E$. One can also notice that $C \subseteq E$.

In order to show the Lipschitz continuity of $F$, since $C$ is bounded, according to Remark \ref{ReBounded} it is enough to prove that this property holds on the set
\begin{align*}
 D & \ = \{x +y \in \mathbb{R}^5: x \in C, \|y\| \leq 2\sqrt{5}\}\\ 
 & \ = \{x \in \mathbb{R}^5: 1-2\sqrt{5} \leq x_i \leq 3+2\sqrt{5}, i=1,2, 3,4,5\}.
\end{align*}
Notice that $C \subseteq D \subseteq E$.

One can easily see that $\|\nabla F(x)\| \leq 148.68=:L >0$ for all $x \in D$, which means according to  the Mean Value Theorem that $F$ is Lipschitz continuous on $D$ with constant $L$. For this numerical experiment we assumed that the constant $L$ is not known in advance and used the adaptive stepsize strategy described in Remark \ref{RemSteps} with $\mu=0.9$ and $\lambda_0=1$. We compared the forward-backward-forward method (FBF) with fixed stepsize $\lambda=0.9/L$, with the one with adaptive stepsize (aFBF)  and the proximal-gradient (ProxGrad) method for fractional programming proposed in \cite[Algorithm 6]{BC17}.
We considered as starting point $x_0=(3,1.5,2,1.5,2)^T$ and as stopping criterion $\|x_n-x^*\| \leq 10^{-6}$.  The optimal solution of 
\eqref{quadpro} $x^*=(1,1,1,1,1)^T$ was obtained by running $10000$ iterations of Algorithm \ref{alg:Tseng} with $\rho_n=1$ for all $n \geq 0$. We solved the quadratic subproblem in \cite[Algorithm 6]{BC17} by using the \emph{quadprog} function in Matlab. The numerical performances of the three methods are 
displayed in Figure \ref{FracProg}. One can notice that the adaptive method aFBF is faster than FBF.  Moreover, both FBF and aFBF outperform the proximal-gradient method from \cite[Algorithm 6]{BC17}. A possible reason is that, while for the first two methods the projection on the set $C$ is computed explicitly, in every iteration of the proximal-gradient method a subproblem is solved by an external solver.

\section{Conclusions and further research} 
\label{sec:Conclusions}

The object of our investigation was a variational inequality of Stampacchia type over a nonempty, convex and closed set governed by a pseudo-monotone and Lipschitz continuous operator.  We associated to it a forward-backward-forward dynamical system and carried out a Lyapunov-type analysis in order to prove the asymptotic convergence of the generated trajectories to a solution of the variational inequality. The explicit time discretization of the dynamical system leads to Tseng's 
forward-backward-forward algorithm with relaxation parameters. We proved convergence of the generated sequence of iterates to a  solution of the variational inequality as well as linear convergence rate under strong pseudo-monotonicity. Numerical experiments show that, when applied to pseudo-monotone variational inequalities over polyhedral sets, the overrelaxed variant algorithm has a better convergence behaviour when compared to other variants and also that Tseng's method outperforms Korpelevich's extragradient method and also the subgradient-extragradient method.

A topic of current interest is the formulation of numerical algorithms for minimax problems, due to its relevance for the training of generative adversarial networks (GANs). We want to investigate the convergence property of Tseng's forward-backward-forward method when solving the variational inequality to which the optimality conditions for the minimax problem give rise in both a deterministic and a stochastic setting and possibly to derive ergodic convergence rates for the gap function associated to the minimax problem.

{\bf Acknowledgements.} The authors are grateful to three anonymous reviewers for their pertinent comments and remarks which improved the quality of the paper.

%


\end{document}